\begin{document}
%\usepackage[inactive]{srcltx} % SRC Specials for DVI Searching

% Over-full v-boxes on even pages are due to the \v{c} in author's name
\vfuzz2pt % Don't report over-full v-boxes if over-edge is small
\newcommand{\red}{\color{red}}
\newcommand{\blue}{\color{blue}}
\newcommand{\black}{\color{black}}

% THEOREM Environments ---------------------------------------------------
 \newtheorem{thm}{Theorem}[section]
 \newtheorem{cor}{Corollary}[section]
 \newtheorem{lem}{Lemma}[section]
 \newtheorem{prop}{Proposition}[section]
 \theoremstyle{definition}
 \newtheorem{defn}{Definition}[section]
 \theoremstyle{remark}
 \newtheorem{rem}{Remark}[section]
 \numberwithin{equation}{section}
% MATH -------------------------------------------------------------------
\newcommand{\CC}{\mathbb{C}}
\newcommand{\KK}{\mathbb{K}}
\newcommand{\ZZ}{\mathbb{Z}}
\newcommand{\RR}{\mathbb{R}}
\def\a{{\alpha}}

\def\b{{\beta}}

\def\d{{\delta}}

\def\g{{\gamma}}

\def\l{{\lambda}}

\def\gg{{\mathfrak g}}
\def\cal{\mathcal}

\title{On naturally graded Lie and (non-Lie) Leibniz superalgebras}

\author{L.M. Camacho}
\address{Luisa Mar\'{i}a Camacho \newline \indent
Dpto. Matem{\'a}tica Aplicada I, Universidad de Sevilla, Sevilla
 (Spain) }
\email{lcamacho@us.es}

\author{R.M. Navarro}
\address{Rosa Mar{\'\i}a Navarro \newline \indent
Dpto. de Matem{\'a}ticas, Universidad de Extremadura, C{\'a}ceres
 (Spain) }
\email{rnavarro@unex.es}
\thanks{This work has been  supported  by Agencia Estatal de Investigaci\'on (Spain), grant MTM2016-79661-P (European FEDER support included, UE), by the PCI of the UCA `Teor\'\i a de Lie y Teor\'\i a de Espacios de Banach' and by the PAI with project number FQM298.}

\author{J.M. S\'{a}nchez}
\address{Jos\'{e} M. S\'{a}nchez \newline \indent
Dpto. de Matem\'aticas, Universidad de C\'adiz, Campus de Puerto Real, C\'adiz (Spain)}
\email{{\tt txema.sanchez@uca.es}}

\begin{abstract}
In general, the study of gradations has always represented a cornerstone in algebra theory. In particular, \textit{naturally graded} seems to be the first and the most relevant gradation when it comes to nilpotent algebras,  a large class of solvable ones. In fact, many families of relevant solvable algebras have been obtained by extensions of naturally graded nilpotent algebras, i.e. solvable algebras with a well-structured nilradical. Thus, the aim of this work is introducing the concept of naturally graded for superalgebra structures such as Lie and (non-Lie) Leibniz. After having defined naturally graded Lie and  Leibniz superalgebras, we   characterize natural gradations on a very important class of each of them, that is, those  with maximal super-nilindex.

\end{abstract}

\maketitle
{\bf 2010 MSC:} {\it 17A32; 17B30;  17B70; 17A70}

{\bf Key-Words:} {\it  Lie  (super)algebras , cohomology, deformation,  Leibniz  (super)algebras , nilpotent, filiform, naturally graded.}

%%%%%%%%%%%%%%%%%%%%%%%%%%%%%%%%%%%%%%%%%%%%%%%%%%%%%%%%%%%%%%%
%%%%%%%%%%%%%%%%%%%%%%%%%%%%%%%%%%%%%%%%%%%%%%%%%%%%%%%%%%%%%%%
\section{Introduction}
%%%%%%%%%%%%%%%%%%%%%%%%%%%%%%%%%%%%%%%%%%%%%%%%%%%%%%%%%%%%%%%
%%%%%%%%%%%%%%%%%%%%%%%%%%%%%%%%%%%%%%%%%%%%%%%%%%%%%%%%%%%%%%%

In general, the study of graded algebras  have always played a fundamental role into Lie theory. Moreover, in nilpotent Lie/Leibniz algebras among all the possible gradations to be considered, natural gradation seems to be the most important due to the fact that it is defined through the filtration which derives from the descending central sequence (see \cite{1,3}). Recently,  the importance of naturally graded Lie/Leibniz algebras has been increased by means of the use of them as nilradical of relevant families of solvable ones  (see  \cite{ JAlgebra,Ancochea, 2,2c}). Therefore, it remains as an inmediate and future work the use of the results of the present paper to obtain important families of solvable Lie/Leibniz superalgebras by extensions of non-nilpotent outer derivations, for instance. Recall that nowadays the classification of solvable algebras is an open and unsolved problem.

Filiform Lie algebras, on one hand, were firstly introduced in \cite{Vergne} by Vergne and they have important properties such as every filiform Lie algebra can be obtained by a deformation of the model filiform algebra $L_n$. Furthermore this type of nilpotent Lie algebras is considered to be the least nilpotent, that is, with the longest possible descending central sequence and maximal nilindex.  The generalization of filiform  Lie algebras for Lie superalgebras has already been obtained  (see \cite{Bor07})  and, in the same way as occurs for Lie algebras, all filiform Lie superalgebras can be obtained by infinitesimal deformations of the model Lie superalgebra $L^{n,m}$ and have maximal super-nilindex. Thus,  we begin the present work introducing the concept of naturally graded for Lie/Leibniz superalgebras and secondly we tackle the problem of determining which filiform Lie superalgebras are naturally graded.  For approaching this second problem we will consider some special infinitesimal deformations of $L^{n,m}$ which are defined by even 2-cocycles in $Z^2_0(L^{n,m},L^{n,m})$.  Throughout this work we give a complete classification (up to isomorphism) of naturally graded filiform Lie superalgebras of dimension less or equal to $7$ as well as some other classifications with either $n$  or $m$ arbitrary.

On the other hand, the notion of Leibniz superalgebras as a generalization of Leibniz algebras was firstly introduced in
\cite{AO0},  general graded Leibniz algebras  was considered before in work \cite{Liv}, though.  Since Leibniz algebras are a generalization of Lie algebras \cite{Loday}, consequently  many of the features of Leibniz superalgebras are generalization of Lie superalgebras \cite{AOK, CGOK1, CGOK2, GOK}.  Likewise,   the study of nilpotent Leibniz algebras \cite{AO0, AO1, AO2} can be very useful to study nilpotent Leibniz superalgebras. The latter constitute the context of the last part of our work, nilpotent Leibniz superalgebras. Thus, as naturally graded Leibniz algebras have been already obtained in \cite{AO1}, our main purpouse throughout the last part of this paper is  obtaining the analog for the case of superalgebras. More precisely, we characterize naturally graded Leibniz superalgebras  with maximal super-nilindex. Note that the techniques used for Leibniz superalgebras are totally different from the ones used for Lie superalgebras and involve a huge amount of computation. Remark that {\em Mathematica} software has been a truly  useful tool to successfully accomplish this task.

Throughout the present paper we will consider vector spaces and algebras over the field of complex numbers  $\mathbb{C}$. 

%%%%%%%%%%%%%%%%%%%%%%%%%%%%%%%%%%%%%%%%%%%%%%%%%%%%%%
%%%%%%%%%%%%%%%%%%%%%%%%%%%%%%%%%%%%%%%%%%%%%%%%%%%%%%
\section{Preliminary results}
%%%%%%%%%%%%%%%%%%%%%%%%%%%%%%%%%%%%%%%%%%%%%%%%%%%%%%
%%%%%%%%%%%%%%%%%%%%%%%%%%%%%%%%%%%%%%%%%%%%%%%%%%%%%%

\subsection{Preliminary results for Lie superalgebras}
 A vector space $V$ is said to be $\ZZ_2${\em -graded} if it admits a decomposition in direct sum, $V=V_{\bar 0} \oplus V_{\bar 1}$, where ${\bar 0}, {\bar 1} \in \ZZ_2$, and such that $V_{\bar i}V_{\bar j} \subset V_{\overline{i+j}}$ for any ${\bar i},{\bar j} \in \ZZ_2$. An element $X \in V$ is called {\it homogeneous of degree} $\bar{i}$ if it is an element of $V_{\bar i}, {\bar j} \in \ZZ_2$. In particular, the elements of $V_{\bar 0}$ (resp. $V_{\bar 1}$) are also called {\em even} (resp. {\em odd}). A {\em Lie superalgebra} (see \cite{Fuks, Scheunert}) is a $\ZZ_2$-graded vector space  $\gg=\gg_{\bar 0} \oplus \gg_{\bar 1}$, with an even bilinear commutation operation (or ``supercommutation'') $[\cdot,\cdot]$, which satisfies the conditions
\begin{enumerate}
\item[1.] $[X,Y]=-(-1)^{\bar{i}\bar{j}}[Y,X],$

\item[2.]  $(-1)^{\bar{k}\bar{i}}[X,[Y,Z]] + (-1)^{\bar{i}\bar{j}}[Y,[Z,X]] + (-1)^{\bar{j}\bar{k}}[Z,[X,Y]]=0$ {\it (super Jacobi identity)}
\end{enumerate}
\indent for all $X \in \gg_{\bar{i}}, Y \in \gg_{\bar{j}}, Z \in \gg_{\bar{k}}$ with $\bar{i},\bar{j},\bar{k} \in \ZZ_2$.

Thus, $\gg_{\bar 0}$ is an ordinary Lie algebra, and $\gg_{\bar 1}$ is a module over $\gg_{\bar 0}$; the Lie superalgebra structure also contains the symmetric pairing $S^2 \gg_{\bar 1} \longrightarrow \gg_{\bar 0}$.  Hereafter, any time we write $[X_i,X_j]$ or $[X_i,Y_j]$ with $X_i$ even and $Y_j$ odd elements, the products $[X_j,X_i]$, $[Y_j,X_i]$ are asumed to be obtained by the skew-symmetric property as it is usual in Lie theory. Also, for simplicity the symmetric products $[Y_i,Y_j]=[Y_j,Y_i]$ are denoted by $(Y_i,Y_j)$. Note that when we deal with Leibniz  superalgebras we do not have skew-symmetric nor symmetric property and each single bracket product must be specified.

In general, the  {\em descending central sequence}  of a Lie superalgebra $\gg=\gg_{\bar 0} \oplus \gg_{\bar 1}$ is defined in the same way as for Lie algebras:  ${\cal C}^0(\gg): =\gg$, ${\cal C}^{k+1}(\gg):=[{\cal C}^k(\gg),\gg]$  for all $k\geq 0$. Consequently, if ${\cal C}^k(\gg)=\{0\}$ for some $k$, then the Lie superalgebra is
called  {\em nilpotent}.  Nevertheless, there are also defined two others descending sequences called ${\cal C}^{k}(\gg_{\bar 0})$ and ${\cal C}^{k}(\gg_{\bar 1})$ which will be  important in our study. They are defined by  ${\cal C}^0(\gg_{\bar i}):=\gg_{\bar i}, {\cal C}^{k+1}(\gg_{\bar i}) := [\gg_{\bar 0}, {\cal C}^k(\gg_{\bar i})], \quad k\geq 0, {\bar i} \in \ZZ_2.$ 

If $\gg=\gg_{\bar 0} \oplus \gg_{\bar 1}$ is a nilpotent Lie superalgebra, then $\gg$ has  {\em super-nilindex} or {\em s-nilindex} $(p,q)$ if satisfies $$({\cal C}^{p-1}(\gg_{\bar 0})) \neq 0, \qquad ({\cal C}^{q-1}(\gg_{\bar 1}))\neq 0, \qquad {\cal C}^{p}(\gg_{\bar 0})={\cal C}^{q}(\gg_{\bar 1})=0.$$ 

It can be noted also that a module $A = A_{\bar 0} \oplus A_{\bar 1}$ of the Lie superalgebra $\mathfrak{g}$ is an even bilinear map $\mathfrak{g} \times A \to A$ satisfying $X(Ya)-(-1)^{\bar{i}\bar{j}}Y(Xa)=[X,Y]a$ for $X \in \mathfrak{g}_{\bar i}$, $Y \in \mathfrak{g}_{\bar j}$, $\bar{i},\bar{j} \in \ZZ_2$, and $a\in A$.

Lie superalgebra cohomology is defined in the  well-known way ( see e.g. \cite{Fuks, cohomology} ).  Thus,  we have the {\it cohomology groups}
$H^q_p(\gg;A):=Z^q_p(\gg;A)\left/ B^q_p(\gg;A) \right.$  where, in particular, the elements of $Z^q_0(\gg;A)$ and $Z^q_1(\gg;A)$ are called {\it even q-cocycles} and {\it odd q-cocycles}, respectively.

 We denote the variety of nilpotent Lie superalgebras $\gg=\gg_{\bar 0} \oplus \gg_{\bar 1}$ with $\dim (\gg_{\bar 0})=n+1$ and $\dim (\gg_{\bar 1})=m$ with the notation ${\cal N}^{n+1,m}$, in complete analogy to Lie algebras (see \cite{ Yu2, Yu3}). Then, any nilpotent Lie superalgebra $\gg=\gg_{\bar 0} \oplus \gg_{\bar 1} \in {\cal N}^{n+1,m}$ with s-nilindex $(n,m)$ is called \textit{filiform} \cite{JGP}.  We denote by ${\cal F}^{n+1,m}$ the subset of ${\cal N}^{n+1,m}$ consisting of all the filiform Lie superalgebras. Likewise, it can be seen that any filiform Lie superalgebra can be always expressed in a suitable basis or so-called {\it adapted basis}. Therefore, if $\gg=\gg_{\bar 0} \oplus \gg_{\bar 1} \in {\cal F}^{n+1,m}$, then there exists an adapted basis of $\gg$, namely $\{X_0, X_1, \dots, X_n, Y_1,\dots,Y_m\}$, with $\{X_0, X_1, \dots, X_n\}$ a basis of $\gg_{\bar 0}$ and $\{Y_1,\dots,Y_m\}$ a basis of $\gg_{\bar 1}$, such
that
$$\begin{array}{lll}
   [X_0,X_i]=X_{i+1},& 1\leq i \leq n-1, &[X_0,X_{n}]=0                   \\[1mm]
   [X_0,Y_j]=Y_{j+1},& 1\leq j\leq m-1, & 
   [X_0,Y_m]=0     
  \end{array}$$

From this result it can be observed that the simplest filiform Lie superalgebra or so-called the {\it model filiform Lie superalgebra}, denoted by $L^{n,m}$, will be defined by the only  non-zero bracket products
$$L^{n,m}:
       \left\{\begin{array}{ll}
          [X_0,X_i]=X_{i+1},& 1\leq i \leq n-1\\[1mm]
          [X_0,Y_j]=Y_{j+1},& 1\leq j \leq m-1
       \end{array}\right.$$

$L^{n,m}$ will be the most important filiform Lie superalgebra, in complete analogy to Lie algebras, since all the other filiform
Lie superalgebras can be obtained from it by deformations. These infinitesimal deformations will be given by the even 2-cocycles, $Z^2_0(L^{n,m},L^{n,m})$, which can be decomposed in the  way 

$$\begin{array}{rl}
Z^2_0(L^{n,m},L^{n,m}) =& \underbrace{Z^2(L^{n,m},L^{n,m})\cap {\rm Hom} (\gg_{\bar 0} \wedge \gg_{\bar 0},\gg_{\bar 0})}_A \oplus \underbrace{Z^2(L^{n,m},L^{n,m})\cap {\rm Hom}
(\gg_{\bar 0} \wedge \gg_{\bar 1},\gg_{\bar 1})}_B \oplus\\
& \underbrace{Z^2(L^{n,m},L^{n,m})\cap {\rm Hom}
(S^2 \gg_{\bar 1},\gg_{\bar 0})}_C \ = \ A \oplus B \oplus C\\
\end{array}$$
 where $\gg_{\bar 0}=L^{n,m}_{\bar 0}$ and $\gg_{\bar 1}=L^{n,m}_{\bar 1}$. The  component $C$  has been determined in \cite{Bor07, JGP2}, and components $A,B$ have been  determined in \cite{JGP3} and \cite{complete}, respectively. Moreover, using these deformations it has been obtained a complete classification (up to isomorphism) of complex filiform Lie superalgebras of dimension less or equal to $7$ (for more details see \cite{lowfiliformLS}). The main result used for this classification is the fact that every filiform Lie superalgebra can be obtained by $L^{n,m}+\Psi$ with $\Psi$ an infinitesimal deformation verifying $\Psi \circ \Psi(x,y,z)=\Psi ( \Psi (x,y),z)+\Psi ( \Psi (z,x),y)+\Psi ( \Psi (y,z),x)=0$.

\subsection{Preliminary results for  Leibniz superalgebras}
Many results and definitions of the above section can be extended for Leibniz superalgebras.

\begin{defn}\label{defB} \cite{AO0}. A $\ZZ_2$-graded vector space $L=L_{\bar 0} \oplus L_{\bar 1}$ is called a  {\em Leibniz superalgebra} if it is equipped with a product $[\cdot,\cdot]$ which satisfies the condition $$[x,[y,z]]=[[x,y],z]-(-1)^{\bar{i}\bar{j}}[[x,z],y] \hspace{0.4cm} \hbox{{\it (super Leibniz identity)}}$$
for all $x \in L$, $y\in L_{\bar{i}}$, $z \in L_{\bar{j}}$, $\bar{i},\bar{j} \in \ZZ_2$. 
\end{defn}

Note that if a Leibniz superalgebra $L$ satisfies the identity $[x,y]=-(-1)^{\bar{i}\bar{j}}[y,x]$ for $x \in L_{\bar{i}}$, $y \in L_{\bar{j}}$, then the super Leibniz identity  becomes the super Jacobi identity.  Therefore Leibniz superalgebras are a generalization of Lie superalgebras. In the same way as for Lie superalgebras, isomorphisms are assumed to be consistent with the $\ZZ_2$-graduation.

If we denote by $R_x$ the right multiplication operator, i.e. $R_x : L \rightarrow L$  given as $R_x(y) := [y,x]$ for $y \in L$, then the super Leibniz identity can be expressed as
$R_{[x,y]} = R_yR_x-(-1)^{\bar{i}\bar{j}}R_xR_y$
where $x\in L_{\bar i}, y \in L_{\bar j}$. 

We denote by $R(L)$ the set of all right multiplication operators. It is not difficult to prove that $R(L)$ with the multiplication 
\begin{equation}\label{product_R(L)}
<R_a,R_b>:=R_aR_b-(-1)^{\bar{i}\bar{j}}R_bR_a
\end{equation}
for $R_a\in {R(L)_{\bar{i}}}$, $R_b\in {R(L)_{\bar{j}}}$, becomes a Lie superalgebra. 

Note that the  concepts of descending central sequence, the variety of Leibniz superalgebras and Engel's theorem are natural extensions from Lie theory.

Let $V = V_{\bar 0} \oplus V_{\bar 1}$ be the underlying vector space of $L$, $L = L_{\bar 0} \oplus L_{\bar 1} \in Leib^{n,m}$, being $Leib^{n,m}$ the variety of Leibniz superalgebras,  and let $G(V)$ be the group of the invertible linear mappings of the form  $f=f_{\bar 0}+f_{\bar 1}$, such that $f_{\bar 0} \in GL(n,\mathbb{C})$ and $f_{\bar 1} \in GL(m,\mathbb{C})$ (then $G(V) = GL(n,\mathbb{C})\oplus GL(m,\mathbb{C}))$. The action of $G(V)$ on $Leib^{n,m}$ induces an action
on the Leibniz superalgebras variety: two laws  $\lambda_1, \lambda_2$ are {\em isomorphic}  if there exists a linear mapping $f = f_{\bar 0}+f_{\bar 1} \in G(V)$, such that  $$\lambda_2(x,y) = f_{\bar{i}+\bar{j}}^{-1}(\lambda_1(f_{\bar{i}}(x),
f_{\bar{j}}(y))), \hspace{0.2cm} \hbox{for any} x \in V_{\bar{i}}, y \in V_{\bar{j}}.$$

The description of the variety of any class of algebras or superalgebras is a difficult problem. Different papers (for example, \cite{AOR, BS, libroKluwer, GO}) are concerning the applications of algebraic groups theory to the description of the variety of Lie/Leibniz algebras.

\begin{defn}
 For a Leibniz superalgebra $L=L_{\bar 0} \oplus L_{\bar 1}$ we define the {\em right annihilator of} $L$ as the set $Ann(L):=\{x \in L : [L,x]=0\}$. 
\end{defn}

It is easy to see that $Ann(L)$ is a two-sided ideal of $L$ and $[x,x] \in Ann(L)$ for any $x \in L_{\bar 0}$. This notion is good and compatible with the right annihilator in Leibniz algebras. If we consider  the ideal generated as  $I:=ideal<[x,y]+(-1)^{\bar{i}\bar{j}}[y,x] : x \in L_{\bar{i}}, y \in L_{\bar{j}}>$, then $I \subset Ann(L).$

Let $L=L_{\bar 0} \oplus L_{\bar 1}$ be a nilpotent Leibniz superalgebra with $\dim L_{\bar 0}=n$ and $\dim L_{\bar 1}=m.$ From Equation \eqref{product_R(L)} we have that $R(L)$ is a Lie superalgebra, and in particular $R(L_{\bar 0})$ is a Lie algebra. As $L_{\bar 1}$ has $L_{\bar 0}$-module structure we can consider $R(L_{\bar 0})$ as a subset of $GL(V_{\bar 1})$  , where  $V_{\bar 1}$  is the underlying vector space of $L_{\bar 1}$. So, we have a Lie algebra formed by nilpotent endomorphisms of $V_{\bar 1}$. Applying the Engel's theorem \cite{Jac} we have the existence of a sequence of subspaces of $V_{\bar 1}$, 
$V_0 \subset V_1 \subset V_2 \subset \dots \subset V_m = V_{\bar 1},$ with $R(L_{\bar 0})(V_{\overline{i+1}})\subset V_{\bar i}.$ Then, it can defined the descending sequences $C^k(L_{\bar 0})$ and $C^k(L_{\bar 1})$ and 
 the  super-nilindex  in the same way as for Lie superalgebras.

Similar to the case of  null-filiform  Leibniz algebras, it is easy to check that a Leibniz superalgebra is  null-filiform  if and only if it is single-generated. Moreover, a  null-filiform  superalgebra  has the maximal super-nilindex (see \cite{null-filiform}).

%%%%%%%%%%%%%%%%%%%%%%%%%%%%%%%%%%%%%%%%%%%%%%%%%%%%%%
%%%%%%%%%%%%%%%%%%%%%%%%%%%%%%%%%%%%%%%%%%%%%%%%%%%%%%
\section{Naturally graded  Lie/Leibniz superalgebras}
%%%%%%%%%%%%%%%%%%%%%%%%%%%%%%%%%%%%%%%%%%%%%%%%%%%%%%
%%%%%%%%%%%%%%%%%%%%%%%%%%%%%%%%%%%%%%%%%%%%%%%%%%%%%%

In terms of algebras, recall that for nilpotent Lie/Leibniz algebras $\gg$, the descending central sequence defines a filtration over the algebra and  therefore there is   associated a graded Lie/Leibniz algebra structure  that we denote as $$gr \gg := \black \sum {\cal C}^{i-1}(\gg)/{\cal C}^{i}(\gg)$$
 If $\gg, gr \gg$ are isomorphic, then $\gg$ is said to be {\bf {\em naturally graded}}.  In particular, there are only two non-isomorphic naturally graded filiform Lie algebras called $L_n$ and $Q_n$, whose laws can be expressed in an adapted basis  $\{X_0, X_1, \dots, X_n\}$ by the following non-zero bracket products (for more details it can be consulted  \cite{libroKluwer, Vergne})
$$\begin{array}{ll}
L_n:  \left\{\begin{array}{ll}
       [X_0,X_i]=X_{i+1}, & \ 1 \leq i \leq n-1\\[1mm]
        \end{array}\right.&
Q_n:  \left\{\begin{array}{ll}
       [X_0,X_i]=X_{i+1}, & \ 1 \leq i \leq n-1\\[1mm]
       [X_i,X_{n-i}]=(-1)^{i}X_n, & \ 1 \leq i \leq n-1\\[1mm]
        \end{array}\right.
    \end{array}$$

Note that both algebras are $(n+1)$-dimensional, although $Q_n$ only appears in the cases $n$ odd and $n\geq 5$.

Analogously, for superalgebra structures, i.e. nilpotent Lie/Leibniz superalgebras  $\gg=\gg_{\bar 0} \oplus \gg_{\bar 1}$,  can be considered the descending sequences ${\cal C}^{k}(\gg_{\bar i}) := \gg_{\bar i}$, ${\cal C}^{k+1}(\gg_{\bar i}) := [\gg_{\bar 0}, {\cal C}^k(\gg_{\bar i})]$, with $k \geq 0$, $\bar{i} \in \ZZ_2$. 

\noindent It can be seen that these sequences define a filtration over $\gg_{\bar 0}$ and $\gg_{\bar 1}$, respectively. Thus, we have on one hand a structure of graded Lie/Leibniz algebra  $$gr \gg_{\bar 0} = \sum {\cal C}^{i-1}(\gg_{\bar 0})/{\cal C}^i(\gg_{\bar 0})$$ and on the other hand a structure of graded $\gg_{\bar 0}$-module $$gr \gg_{\bar 1} = \sum {\cal C}^{i-1}(\gg_{\bar 1})/{\cal C}^{i}(\gg_{\bar 1})$$ 
If we denote $\gg^{i}_{\bar 0}:={\cal C}^{i-1}(\gg_{\bar 0})/{\cal C}^{i}(\gg_{\bar 0})$ and $\gg^{i}_{\bar 1}:={\cal C}^{i-1}(\gg_{\bar 1})/{\cal C}^{i}(\gg_{\bar 1})$, then it is verified that $$[\gg^{i}_{\bar 0}, \gg^{j}_{\bar 0}] \subset \gg^{i+j}_{\bar 0} \qquad \mbox{  and  } \qquad [\gg^{i}_{\bar 0}, \gg^{j}_{\bar 1}] \subset \gg^{i+j}_{\bar 1}$$

\begin{defn}
Given a nilpotent Lie (resp. Leibniz) superalgebra $\gg = \gg_{\bar 0} \oplus \gg_{\bar 1}$, consider $\gg^{i}=\gg^{i}_{\bar 0} \oplus \gg^{i}_{\bar 1}$, with $\gg^{i}_{\bar 0}={\cal C}^{i-1}(\gg_{\bar 0})/{\cal C}^{i}(\gg_{\bar 0})
 \mbox{  and  }  \gg^{i}_{\bar 1}={\cal C}^{i-1}(\gg_{\bar 1})/{\cal C}^{i}(\gg_{\bar 1})$. Thus, $\gg$ is said to be {\bf {\em naturally graded}} if the following conditions hold:
\begin{itemize}
\item[$1.$]  $gr (\gg)=\sum_{i \in \mathbb{N}} \gg^{i}$  is a graded Lie (resp. Leibniz) superalgebra ($[\gg^{i},\gg^{j}]\subset \gg^{i+j}$),
\item[$2.$] $\gg, gr(\gg)$ are isomorphic.
\end{itemize}
\end{defn}

\begin{rem}
The condition $1$ is in fact required in the above definition. Note that in contrast with Lie/Leibniz algebras, $gr (\gg)$ could not be graded. Indeed, if we consider the nilpotent  Leibniz superalgebra (which is also a Lie superalgebra) expressed in the adapted basis $\{X_1, X_2, X_3, X_4, Y_1\}$ as $$\left\{\begin{array}{lll}
[X_2,X_1]=X_3, &  [X_3,X_1]=X_4, & [X_1,X_2]=-X_{3}\\[1mm]
[X_1,X_2]=-X_3, &  [X_1,X_3]=-X_4, &  [Y_1,Y_1]=X_4\\[1mm]
\end{array}\right.$$
we have $\gg^1=\gg^1_{\bar 0} \oplus \gg^1_{\bar 1} = ({\cal C}^0(\gg_{\bar 0})/{\cal C}^1(\gg_{\bar 0})) \oplus 
({\cal C}^0(\gg_{\bar 1})/{\cal C}^1(\gg_{\bar 1})) = <X_1,X_2>\oplus <Y_1>$
Analogously,  $L^2=<X_3>$ and $\gg^3=<X_4>$. Nevertheless, as $(Y_1,Y_1)=X_4$ then it is not verified that $[\gg^1,\gg^1]\subset \gg^2$ and consequently it is not graded.             
\end{rem}

%%%%%%%%%%%%%%%%%%%%%%%%%%%%%%%%%%%%%%%%%%%%%%%%%%%%%%%%%%%%%%
%%%%%%%%%%%%%%%%%%%%%%%%%%%%%%%%%%%%%%%%%%%%%%%%%%%%%%%%%%%%%% 
\section{Naturally graded filiform Lie superalgebras}
%%%%%%%%%%%%%%%%%%%%%%%%%%%%%%%%%%%%%%%%%%%%%%%%%%%%%%%%%%%%%%
%%%%%%%%%%%%%%%%%%%%%%%%%%%%%%%%%%%%%%%%%%%%%%%%%%%%%%%%%%%%%%

Among all the nilpotent Lie superalgebras, the filiform ones (those with maximal super-nilindex) constitutes one of the most important types due to its properties and applications.  Thanks to both the theorem of adapted basis and the definition itself of filiform Lie superalgebras it can be seen that  if $\gg = \gg_{\bar 0} \oplus \gg_{\bar 1}$ is a filiform Lie superalgebra, then its graded superalgebra associated $gr (\gg)=\sum_{i \in \mathbb{N}} \gg^{i}$  with $[\gg^i,\gg^j]\subset \gg^{i+j}$, is exactly
$$\begin{array}{ccccccc}
\underbrace{<X_0,X_1,Y_1>} & \oplus & \underbrace{<X_2, Y_2>} & \oplus &\underbrace{ <X_3,Y_3>} & \oplus & \dots  \\
\gg^1 &  &  \gg^2 & & \gg^3 & &  
\end{array}$$
where for simplicity $\gg^1=\gg^1_{\bar 0} \oplus \gg^1_{\bar 1} = <X_0,X_1>\oplus<Y_1>$ has been replaced by $<X_0,X_1,Y_1>$ as there can not be any possible confusion between even elements, called $X_i$, and odd ones, called $Y_j$.  Similarly,  the same replacement has been applied to every $\gg^i$. The last terms of $gr(\gg)$ depend on three possibilities:

\

1. If $n<m$, then 
\begin{flushright}
$\begin{array}{ccccccccc}
\dots & \oplus & \underbrace{<X_n,Y_n>} & \oplus &\underbrace{ <Y_{n+1}>} & \oplus & \dots & \oplus & \underbrace{<Y_m>} \\
& & \gg^n & &  \gg^{n+1}& &&& \gg^m \\
\end{array}$
\end{flushright}

2. If $n=m$, then 
\begin{flushright}
$\begin{array}{ccccc}
\dots & \oplus & \underbrace{<X_{n-1},Y_{n-1}>} & \oplus & \underbrace{<X_n,Y_n>} \\
& & \gg^{n-1}& & \gg^n \\
\end{array}$
\end{flushright}

3. If $n>m$, then 
\begin{flushright}
$\begin{array}{ccccccccc}
\dots & \oplus & \underbrace{<X_m,Y_m>} & \oplus & \underbrace{<X_{m+1}>} & \oplus & \dots & \oplus & \underbrace{<X_n>}\\
& & \gg^m & & \gg^{m+1}& &&& \gg^n \\
\end{array}$
\end{flushright}

\begin{rem}
It can be seen that the even elements of $gr(\gg)$, that is $\sum \gg^i_{\bar 0}$, constitutes a naturally graded filiform Lie algebra. Furthermore, the definition of naturally graded Lie superalgebras is a generalization of  the one of  naturally graded Lie algebras. Thus, if $\gg=\gg_{\bar 0} \oplus \gg_{\bar 1}$ is a naturally graded Lie superalgebra, then $\gg_{\bar 0}$ is a naturally graded Lie algebra. Indeed, $\gg^i_{\bar 0}$ is equivalent to ${\cal C}^{i-1}(\gg_{\bar 0})/{\cal C}^i(\gg_{\bar 0})$. Recall that the latter is used in the definition of naturally graded Lie algebras. 
\end{rem}   

Thanks to this remark we can divide our study into two cases:  either $\gg_{\bar 0}=L_n$ or $\gg_{\bar 0}=Q_n$. Nevertheless, before doing that we are going to obtain all the naturally graded filiform Lie superalgebras for low dimensions. We can accomplish such classification due to the results of \cite{lowfiliformLS}.

\begin{thm}
Let $\gg=\gg_{\bar 0} \oplus \gg_{\bar 1}$ be a non-degenerated filiform Lie superalgebra with $\dim(\gg_{\bar 0})=n+1$ and $\dim(\gg_{\bar 1})=m$. If  $\dim(\gg) \leq 7$  and $\gg$ is naturally graded then the law of $\gg$ will be isomorphic to  one  law of the following list. 

\

\noindent {\bf List of  laws }

\

\noindent {\bf Pair of dimensions:} $n=2$, $m=1$. There is only one naturally graded filiform Lie superalgebra whose law can be expressed in an adapted basis  by
$$L^{2,1}+\varphi_{1,2}:\left\{\begin{array}{ll}
[X_0,X_1]=X_{2}, & (Y_1,Y_1)=X_2
\end{array}\right.$$ 

\noindent {\bf Pair of dimensions:}  $n=2$, $m=2$. There is only one naturally graded filiform Lie superalgebra whose law can be expressed in an adapted basis  as
$$L^{2,2}+\varphi_{1,2}:\left\{\begin{array}{ll}
[X_0,X_1]=X_2, & (Y_1,Y_1)=X_2\\[1mm]
[X_0,Y_1]=Y_2,&
\end{array}\right.$$ 

\noindent {\bf Pair of dimensions:} $n=2$, $m=3$. There is only one naturally graded filiform Lie superalgebra whose law can be expressed in an adapted basis  by
$$L^{2,3}+\varphi_{1,2}:
\left\{\begin{array}{ll}
[X_0,X_1]=X_2, & (Y_1,Y_1)=X_2\\[1mm]
[X_0,Y_i]=Y_{i+1}, & 1 \leq i \leq 2
\end{array}\right.$$ 

\noindent {\bf Pair of dimensions:}  $n=2$, $m=4$. There is only one naturally graded filiform Lie superalgebra whose law can be expressed in an adapted basis by
$$L^{2,4}+\varphi_{1,2}:\left\{\begin{array}{ll}
[X_0,X_1]=X_2, &  [X_0,Y_i]=Y_{i+1}, \ 1 \leq i \leq 3 \\[1mm]
(Y_1,Y_1)=X_2  &  \\[1mm]
\end{array}\right.$$

\noindent {\bf Pair of dimensions:}  $n=3$, $m=2$. There are two non-isomorphic  naturally graded filiform Lie superalgebras whose laws can be expressed in an adapted basis by
$$\begin{array}{ll}
L^{3,2}+\varphi_{1,2}:& L^{3,2}+\Psi_{1,1}^2+\varphi_{1,2}:\\
\left\{\begin{array}{ll}
[X_0,X_i]=X_{i+1}, \ 1 \leq i \leq 2& (Y_1,Y_1)=X_2  \\[1mm]
[X_0,Y_1]=Y_2 &(Y_1,Y_2)=\frac{1}{2}X_3%\\[1mm]
\end{array}\right.&
\left\{\begin{array}{ll}
[X_0,X_i]=X_{i+1}, \ 1 \leq i \leq 2&(Y_1,Y_1)=X_2 \\[1mm] 
[X_0,Y_1]=Y_2 & (Y_1,Y_2)=\frac{1}{2}X_3\\[1mm]
[X_1,Y_1]=Y_2 &
\end{array}\right.
\end{array}$$

\noindent {\bf Pair of dimensions:} $n=4$, $m=2$. There is only one naturally graded filiform Lie superalgebra whose law can be expressed in an adapted basis by
$$L^{4,2}+\overline{\varphi}_{2,4}:\left\{\begin{array}{ll}
[X_0,X_i]=X_{i+1}, \ 1 \leq i \leq 3 &  (Y_1,Y_2)=X_3 \\[1mm]
[X_0,Y_1]=Y_2& (Y_2,Y_2)=X_4\\[1mm]
(Y_1,Y_1)=2X_2  &  
\end{array}\right.$$
                          
\noindent {\bf Pair of dimensions:} $m=3$, $n=3$. There are two non-isomorphic  naturally graded filiform Lie superalgebras whose laws can be expressed in an adapted basis as
$$\begin{array}{ll}
L^{3,3}+\varphi_{1,2}:& L^{3,3}+\Psi_{1,1}^2+\varphi_{1,2}:\\
 \left\{\begin{array}{ll}
[X_0,X_i]=X_{i+1}, \ 1 \leq i \leq 2& (Y_1,Y_1)=X_2 \\[1mm]
[X_0,Y_{j}]=Y_{j+1}, \ 1 \leq j \leq 2 &(Y_1,Y_2)=\frac{1}{2}X_3 
\end{array}\right.&\left\{\begin{array}{ll}
[X_0,X_i]=X_{i+1}, \ 1 \leq i \leq 2 &(Y_1,Y_1)=X_2   \\[1mm] 
[X_0,Y_{j}]=Y_{j+1}, \ 1 \leq j \leq 2& (Y_1,Y_2)=\frac{1}{2}X_3  \\[1mm] 
[X_1,Y_1]=Y_2 & \\[1mm]
[X_1,Y_2]=Y_3 & 
\end{array}\right.
\end{array}$$

\noindent where the usual bracket products $[\ ,\ ]$ are skew-symmetrical and the products deriving from the symmetric pairing $S^2 \gg_{\bar 1} \longrightarrow \gg_{\bar 0}$ and denoted by $( \ , \ ) $ are symmetrical.  
\end{thm}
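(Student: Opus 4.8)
The plan is to derive the classification as a finite test applied to the complete low-dimensional list of [lowfiliformLS], after first using the preceding Remark to cut down the possibilities. If $\gg=\gg_{\bar 0}\oplus\gg_{\bar 1}$ is naturally graded, then its even part $\gg_{\bar 0}$ is a naturally graded filiform Lie algebra, so $\gg_{\bar 0}\cong L_n$ or $\gg_{\bar 0}\cong Q_n$. Since $\dim\gg_{\bar 0}=n+1\le 6$ forces $n\le 5$, and $Q_n$ requires $n$ odd with $n\ge 5$, the only room for the $Q_n$ branch is $Q_5$, which occurs only for the pair $(n,m)=(5,1)$. Every filiform Lie superalgebra in our range can be written $\gg=L^{n,m}+\Psi$ with $\Psi\in Z^2_0(L^{n,m},L^{n,m})=A\oplus B\oplus C$ subject to the integrability identity $\Psi\circ\Psi=0$ recalled before, and the finite list of all such $\gg$ with $\dim\gg\le 7$ is supplied by [lowfiliformLS]; the proof reduces to deciding which members are naturally graded.

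The key reduction is a clean criterion for natural gradation in terms of $\Psi$. The model $L^{n,m}$ carries the weight assignment $\deg X_0=\deg X_1=\deg Y_1=1$, $\deg X_i=i$, $\deg Y_j=j$, which realizes exactly the graded pieces $\gg^1=\langle X_0,X_1,Y_1\rangle$ and $\gg^k=\langle X_k,Y_k\rangle$ displayed before the theorem; a cocycle is homogeneous of degree $0$ precisely when it maps $\gg^i\times\gg^j$ into $\gg^{i+j}$. I would prove that $\gg=L^{n,m}+\Psi$ is naturally graded if and only if, up to the $G(V)$-action, $\Psi$ coincides with its weight-$0$ homogeneous component. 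The ``if'' direction is immediate, since a weight-$0$ bracket respects the displayed decomposition and hence $gr(\gg)\cong\gg$. For the converse I would split $\Psi=\sum_d\Psi_d$ into weight-homogeneous pieces and observe that passing to $gr(\gg)$ retains only $\Psi_0$, so $\gg\cong gr(\gg)$ holds exactly when the components $\Psi_d$ with $d\neq 0$ can be absorbed by an adapted change of basis.

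With this criterion the argument is a finite verification over the pairs $(n,m)$ with $n\ge 2$, $m\ge 1$ and $(n+1)+m\le 7$: intersect the [lowfiliformLS] list with the weight-$0$ condition and discard degenerate algebras. The surviving deformations are exactly those whose nonzero values balance the weights, e.g. $(Y_1,Y_1)=X_2$ (weights $1+1=2$), $(Y_1,Y_2)=\frac{1}{2}X_3$ ($1+2=3$), $[X_1,Y_1]=Y_2$ ($1+1=2$), and the combination defining $\overline{\varphi}_{2,4}$, yielding precisely the listed representatives. The excluded pairs $(3,1),(4,1),(5,1)$ are handled uniformly by the integrability identity: for $m=1$ the component $B$ admits no weight-$0$ cocycle, so the only candidate is $(Y_1,Y_1)=cX_2$, and super Jacobi gives $[X_0,(Y_1,Y_1)]=c\,[X_0,X_2]=cX_3$ on one side but $2\,([X_0,Y_1],Y_1)=0$ on the other (as no $Y_2$ exists), forcing $c=0$ whenever $X_2$ is not the top of $\gg_{\bar 0}$; the same computation eliminates the $Q_5$ branch. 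Hence only the trivial extension $L^{n,1}$ remains, which is degenerate, whereas for $(2,1)$ the element $X_2$ is the top of $\gg_{\bar 0}$ and $(Y_1,Y_1)=X_2$ is admissible. Finally each surviving algebra is put into the listed normal form by rescaling under $G(V)=GL(n+1,\CC)\oplus GL(m,\CC)$.

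The conceptual content is light, and I expect the main obstacle to be organizational and computational rather than structural. For each candidate one must compute the two descending sequences $\mathcal{C}^k(\gg_{\bar 0})$ and $\mathcal{C}^k(\gg_{\bar 1})$, assemble $gr(\gg)$, test $[\gg^i,\gg^j]\subset\gg^{i+j}$, and then resolve the isomorphism question modulo $G(V)$. The two most delicate points are deciding when two weight-$0$ deformations are genuinely non-isomorphic, for instance $L^{3,2}+\varphi_{1,2}$ versus $L^{3,2}+\Psi_{1,1}^2+\varphi_{1,2}$ (distinguished by the invariant $[X_1,Y_1]\neq 0$), and the analogous pair in dimensions $(3,3)$; and confirming that the excluded $m=1$ pairs really admit no non-degenerate naturally graded representative. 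These are exactly the steps where the explicit descriptions of the components $A$, $B$, $C$ and the low-dimensional classification of [lowfiliformLS] must be invoked.
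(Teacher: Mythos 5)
Your proposal is correct and follows essentially the same route the paper takes: the paper states this theorem without a written proof, deferring to the classification of \cite{lowfiliformLS}, and its explicit proofs of the analogous theorems in the later subsections proceed exactly as you describe — reduce to $L^{n,m}+\Psi$ with $\Psi$ a weight-zero combination of the cocycles $\varphi_{k,s}$, $\Psi_{k,1}^{s}$, impose $\Psi\circ\Psi=0$, and normalize by adapted changes of basis, with the $m=1$ and $Q_n$ branches dying by the same super Jacobi computation you give. The only point deserving extra care is your "iff'' criterion and the isomorphism questions (e.g.\ $L^{3,2}+\varphi_{1,2}$ versus $L^{3,2}+\Psi_{1,1}^2+\varphi_{1,2}$), which you correctly flag as requiring the explicit $G(V)$-analysis rather than the basis-dependent condition $[X_1,Y_1]\neq 0$ alone.
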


\begin{rem}
There is no any naturally graded filiform Lie superalgebra for  the cases $n=3, m=1$;  $n=4, m=1$ and $n=5, m=1$.  Note also that all the naturally graded filiform Lie superalgebras of the aforementioned theorem derive from the naturally graded filiform Lie algebra $L_n$. There is no-one deriving from $Q_n$.
\end{rem}

Next, we  start with $\dim(\gg_{\bar 0})=3=n+1$.

%%%%%%%%%%%%%%%%%%%%%%%%%%%%%%%%%%%%%%%%%%%%%%%%%%%%%%%%%%%%%%
%%%%%%%%%%%%%%%%%%%%%%%%%%%%%%%%%%%%%%%%%%%%%%%%%%%%%%%%%%%%%%
\subsection{$n=2$ and $m$ arbitrary}
%%%%%%%%%%%%%%%%%%%%%%%%%%%%%%%%%%%%%%%%%%%%%%%%%%%%%%%%%%%%%%
%%%%%%%%%%%%%%%%%%%%%%%%%%%%%%%%%%%%%%%%%%%%%%%%%%%%%%%%%%%%%%

Throughout this subsection we are going to study naturally graded filiform Lie superalgebras $\gg=\gg_{\bar 0} \oplus \gg_{\bar 1}$ with $\dim(\gg_{\bar 0})=n+1=3$ and $m \geq 5$ since the cases $1 \leq m \leq 4$ have already been studied. 

\begin{thm}
Let $\gg=\gg_{\bar 0} \oplus\gg_{\bar 1}$ be a non-degenerated filiform Lie superalgebra with $\dim(\gg_{\bar 0})=3$ and $\dim(\gg_{\bar 1})=m$ with $m \geq 5$. If $\gg$ is naturally graded then the law of $\gg$  is  isomorphic to 
$$L^{2,m}+\varphi_{1,2}: \left\{\begin{array}{ll}
[X_0,X_1]=X_2 & (Y_1,Y_1)=X_2 \\[1mm]
[X_0,Y_i]=Y_{i+1}, & 1 \leq i \leq m-1   \\[1mm]
\end{array}\right.$$ 
\end{thm}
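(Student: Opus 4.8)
The strategy is to start from the structural decomposition established earlier: since $\gg$ is a non-degenerated naturally graded filiform Lie superalgebra, $\gg_{\bar 0}$ must itself be a naturally graded filiform Lie algebra (by the Remark following the graded-structure discussion), and with $\dim(\gg_{\bar 0}) = 3 = n+1$ we have $n = 2$. The only naturally graded filiform Lie algebras are $L_n$ and $Q_n$, but $Q_n$ only occurs for $n$ odd with $n \geq 5$; hence $\gg_{\bar 0} = L_2$, giving the even bracket $[X_0,X_1] = X_2$. I would then fix an adapted basis $\{X_0, X_1, X_2, Y_1, \dots, Y_m\}$ so that the module action reads $[X_0, Y_i] = Y_{i+1}$ for $1 \leq i \leq m-1$ and $[X_0, Y_m] = 0$, which accounts for all brackets of type $A$ and $B$ in the cocycle decomposition $Z^2_0(L^{2,m}, L^{2,m}) = A \oplus B \oplus C$.

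The heart of the argument is to determine the symmetric pairing $C \colon S^2 \gg_{\bar 1} \to \gg_{\bar 0}$, i.e. the products $(Y_i, Y_j) \in \gg_{\bar 0} = \langle X_0, X_1, X_2 \rangle$. First I would exploit the grading: since $Y_i \in \gg^i_{\bar 1}$ and the bracket must respect the grading $[\gg^i, \gg^j] \subset \gg^{i+j}$, the product $(Y_i, Y_j)$ can only land in $\gg^{i+j}_{\bar 0}$. Because $\gg_{\bar 0}$ contributes nonzero graded pieces only in degrees $1, 2$ (namely $\langle X_0, X_1\rangle$ and $\langle X_2\rangle$, as $n=2$), the constraint forces $(Y_i, Y_j) = 0$ unless $i+j = 2$, so the only candidate nonzero symmetric product is $(Y_1, Y_1) = \lambda X_2$ for some $\lambda \in \CC$. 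Non-degeneracy rules out $\lambda = 0$, and by rescaling $Y_1$ (and propagating through $Y_{i+1} = [X_0, Y_i]$) I would normalize $\lambda = 1$, yielding exactly $(Y_1, Y_1) = X_2$.

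What remains is to verify that this candidate law $L^{2,m} + \varphi_{1,2}$ actually satisfies the super Jacobi identity and that no other cocycle components can be switched on consistently. The main obstacle I anticipate is handling the type-$C$ cocycles carefully: I must confirm via the super Jacobi identity applied to triples $(X_0, Y_i, Y_j)$ that the single product $(Y_1,Y_1) = X_2$ propagates correctly and does not generate forced contradictions or additional terms—concretely, $(Y_i, Y_j)$ for $i+j > 2$ must vanish, and one checks $[X_0,(Y_1,Y_1)] = 2([X_0,Y_1],Y_1) = 2(Y_2, Y_1)$, which combined with $[X_0, X_2] = 0$ forces $(Y_1, Y_2) = 0$ and inductively all higher pairings vanish. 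I would also argue that any nonzero contributions from $A$ or $B$ would either violate the grading (producing a non-naturally-graded algebra) or can be absorbed by a change of adapted basis, so that up to isomorphism only the stated law survives. Finally, since the candidate lies in the classified deformation space and passes the integrability condition $\Psi \circ \Psi = 0$, it is a genuine Lie superalgebra, completing the classification.
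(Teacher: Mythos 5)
Your treatment of the symmetric pairing $C$ is correct and matches the paper's: the grading forces $(Y_i,Y_j)\subset\gg^{i+j}_{\bar 0}=0$ for $i+j>2$, leaving only $(Y_1,Y_1)=\lambda X_2$ with $\lambda\neq 0$ by non-degeneracy, normalized to $1$. The genuine gap is in your handling of the $B$-component. The adapted basis only fixes the brackets $[X_0,Y_i]=Y_{i+1}$; it does \emph{not} ``account for all brackets of type $B$''. The products $[X_1,Y_j]=b_{1j}Y_{j+1}$ and $[X_2,Y_j]=b_{2j}Y_{j+2}$ are fully compatible with the natural grading (they land in $\gg^{1+j}_{\bar 1}$ and $\gg^{2+j}_{\bar 1}$ respectively), so your later dichotomy --- that any $B$-contribution ``would either violate the grading or can be absorbed by a change of adapted basis'' --- fails. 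The cocycle condition forces these constants to come from $a\Psi^2_{1,1}+b\Psi^3_{2,1}$ (the two weight-zero cocycles in $B$), and while the $\Psi^2_{1,1}$ direction is indeed absorbed by the basis change $X_1'=X_1-aX_0$, the $\Psi^3_{2,1}$ direction ($[X_2,Y_j]=bY_{j+2}$, $[X_1,Y_j]=-b(j-1)Y_{j+1}$) is neither grading-violating nor removable by a change of basis. It must be killed by the integrability condition: the paper evaluates $\Psi\circ\Psi(X_1,X_2,Y_1)=0$ (which produces a term proportional to $b^2 Y_4$, available since $m\geq 5$) to conclude $b=0$. Your proposal invokes $\Psi\circ\Psi=0$ only at the very end, as a check that the surviving candidate is a genuine Lie superalgebra, not as the mechanism that eliminates $\Psi^3_{2,1}$; without that step the classification is incomplete, since $L^{2,m}+\varphi_{1,2}+b\Psi^3_{2,1}$ would remain as an unexcluded naturally graded candidate.
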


\begin{proof}
It can be seen that $\gg_{\bar 0}=L_2$ since $\dim(\gg_{\bar 0})=3$ and $\gg_{\bar 0}$ is a naturally graded filiform Lie algebra. Therefore $gr(\gg)$ is exactly 
$$\begin{array}{ccccccccc}
\underbrace{<X_0,X_1,Y_1>} & \oplus & \underbrace{<X_2, Y_2>} & \oplus & \underbrace{<Y_3>}& \oplus & \dots &\oplus & \underbrace{<Y_m>}\\
\gg^{1} & & \gg^{2} & & \gg^{3} & &  && \gg^{m}
 \end{array}$$
 
Thus, the bracket products can be considered to be 
$$\left\{\begin{array}{ll}
[X_0,X_1]=X_{2},&   [X_1,Y_i]=b_{1i}Y_{i+1}, \ 1\leq i \leq m-1\\[1mm]
[X_0,Y_{i}]=Y_{i+1}, \ 1 \leq i \leq m-1,&  [X_2,Y_i]=b_{2i}Y_{i+2}, \ 1 \leq i \leq m-2   \\[1mm]
(Y_1,Y_1)=c_{11}X_2  &  
\end{array}\right.$$
with $c_{11} \neq 0$ in order not to have a degenerated Lie superalgebra, that is, a Lie algebra. By means of a simple change of scale it can be seen that there is no loss of generality in considering $c_{11}=1$. It can be seen that $(Y_1,Y_1)=X_2$ corresponds with the cocycle $\varphi_{1,2}$. Additionally, the remaining structure constants  $b_{1i}, b_{2i}$  indeed will be determined by the cocycles  $\Psi_{1,1}^2, \Psi_{2,1}^3,$ respectively (more details it can be consulted \cite{complete}) given as
$$\begin{array}{lll}
\Psi_{1,1}^2: \ [X_1,Y_j]=Y_{j+1}, \ 1 \leq j \leq m-1 & \quad & \Psi_{2,1}^3:\left\{\begin{array}{l}
[X_1,Y_j]=-(j-1)Y_{j+1}, \ 2 \leq j \leq m-1 \\[1mm]
[X_2,Y_j]=Y_{j+2}, \ 1 \leq j \leq m-2 \\[1mm]
\end{array}\right.
\end{array}$$ 

These cocycles, or so-called  {\em infinitesimal deformations},  are the only one ``naturally graded", that is, of weight $0$. Recall that the {\it weight} of $\Psi_{k,1}^s$ is $s-k-1$. 

Next, we  apply the method of classification used in \cite{lowfiliformLS}: {\it every filiform Lie superalgebra can be expressed by $L^{n,m}+\Psi$,  with  $L^{n,m}$ the law of the model filiform Lie superalgebra and $\Psi$ a linear (infinitesimal) deformation that verifies $\Psi \circ \Psi=0$, being}  $\Psi \circ \Psi(x,y,z)=\Psi ( \Psi (x,y),z)+\Psi ( \Psi (z,x),y)+\Psi ( \Psi (y,z),x)$. Thus, we can consider 
$$L^{2,m}+\Psi: \left\{\begin{array}{ll}
[X_0,X_1] = X_2,  &  [X_1,Y_j]=[a-b(j-1)]Y_{j+1}, \ 2\leq i \leq m-1 \\[1mm]
[X_0,Y_i] = Y_{i+1}, \ 1 \leq i \leq m-1,  &  [X_2,Y_j]=bY_{j+2}, \ 1 \leq i \leq m-2 \\[1mm]
[X_1,Y_1]=aY_2,  &  (Y_1,Y_1)=X_2 \\[1mm]
\end{array}\right.$$
where $\Psi=\varphi_{1,2}+a \Psi_{1,1}^2+ b \Psi_{2,1}^3$. From the condition $\Psi \circ \Psi(X_1,X_2,Y_1)=0$ it is obtained that $b=0$. Then applying the isomorphism, or so-called change of basis, defined by $X'_0=X_0$, $X'_1=X_1-aX_0$, $X'_2=X_2$ and $Y'_j=Y_j$  for any $j \in \mathbb{N}$, it can seen that there is no loss of generality in supposing $a=0$. Consequently, it remains only $L^{2,m} + \varphi_{1,2}$.
\end{proof}

%%%%%%%%%%%%%%%%%%%%%%%%%%%%%%%%%%%%%%%%%%%%%%%%%%%%%%%%%%%%%%
%%%%%%%%%%%%%%%%%%%%%%%%%%%%%%%%%%%%%%%%%%%%%%%%%%%%%%%%%%%%%%
\subsection{$n=3$ and $m=4$} In this case we have the following results:
%%%%%%%%%%%%%%%%%%%%%%%%%%%%%%%%%%%%%%%%%%%%%%%%%%%%%%%%%%%%%%
%%%%%%%%%%%%%%%%%%%%%%%%%%%%%%%%%%%%%%%%%%%%%%%%%%%%%%%%%%%%%%

\begin{thm}
Let $\gg=\gg_{\bar 0} \oplus \gg_{\bar 1}$ be a non-degenerated filiform Lie superalgebra with $\dim(\gg_{\bar 0}) = \dim(\gg_{\bar 1})=4$. If $\gg$ is naturally graded then the law of $\gg$  is  isomorphic to one of the following list of pairwise non-isomorphic laws 

$$\begin{array}{lll}
\begin{array}{l}
L^{3,4}+\varphi_{1,2}:\\ \\
\left\{\begin{array}{l}
   [X_0,X_i]=X_{i+1}\\[1mm]
   [X_0,Y_{j}]=Y_{j+1}\\[1mm] 
   (Y_1,Y_1)=X_2\\[1mm]
   (Y_1,Y_2)=\frac{1}{2}X_3 
   \end{array}\right.
   \end{array}
&
\begin{array}{ll}
L^{3,4}+\varphi_{1,2} + \Psi_{3,1}^4:\\ \\
 \left\{\begin{array}{ll}
   [X_0,X_i]=X_{i+1}&(Y_1,Y_1)=X_2 \\[1mm]
   [X_0,Y_{j}]=Y_{j+1}  & (Y_1,Y_2)=\frac{1}{2}X_3   \\[1mm]
   [X_1,Y_3]=Y_4& \\[1mm]
   [X_2,Y_2]=-Y_4 &\\[1mm]
   [X_3,Y_1]=Y_4&
   
 \end{array}\right.     
                        \end{array}                 
                  &
                  \begin{array}{l}
                  L^{3,4}+\varphi_{1,2} + \Psi_{1,1}^2:\\ \\
 \left\{\begin{array}{l}
   [X_0,X_i]=X_{i+1}  \\[1mm]
   [X_0,Y_{j}]=Y_{j+1}   \\[1mm]
   [X_1,Y_j]=Y_{j+1} \\[1mm]
   (Y_1,Y_1)=X_2\\[1mm]
   (Y_1,Y_2)=\frac{1}{2}X_3
\end{array}\right.
\end{array} \end{array}
$$
with $1 \leq i \leq 2 $ and $1 \leq j \leq 3.$
\end{thm}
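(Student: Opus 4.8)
The plan is to reproduce the strategy of the previous subsection, now with $n=3$ and $m=4$, so that the associated graded decomposes as $\gg^1=\langle X_0,X_1,Y_1\rangle$, $\gg^2=\langle X_2,Y_2\rangle$, $\gg^3=\langle X_3,Y_3\rangle$ and $\gg^4=\langle Y_4\rangle$. The first step is to pin down $\gg_{\bar 0}$: since $\dim(\gg_{\bar 0})=4$ and $Q_n$ exists only for odd $n\ge 5$, the even part must be $L_3$, so in an adapted basis $[X_0,X_1]=X_2$, $[X_0,X_2]=X_3$, with the gradation permitting at most an extra term $[X_1,X_2]=aX_3$. Writing out every bracket compatible with $[\gg^i,\gg^j]\subset\gg^{i+j}$, the remaining unknowns are the module constants defined by $[X_i,Y_j]=b_{ij}Y_{i+j}$ for $i+j\le 4$, together with the symmetric pairing $(Y_1,Y_1)=c_{11}X_2$ and $(Y_1,Y_2)=c_{12}X_3$; all higher pairings land in $\gg^i_{\bar 0}=0$ for $i\ge 4$ and therefore vanish. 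Non-degeneracy forces $c_{11}\ne 0$, which I normalize to $1$ and identify with $\varphi_{1,2}$, and the super Jacobi identity on $(X_0,Y_1,Y_1)$ immediately yields $c_{12}=\frac{1}{2}$.

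Following the classification device of \cite{lowfiliformLS}, I next write $\gg=L^{3,4}+\Psi$ with $\Psi=\varphi_{1,2}+\lambda_1\Psi_{1,1}^2+\lambda_2\Psi_{2,1}^3+\lambda_3\Psi_{3,1}^4$ (the weight-$0$ cocycles for $L^{3,4}$; see \cite{complete}) and impose the integrability condition $\Psi\circ\Psi=0$. The plan is to evaluate this obstruction on a short list of triples that I expect to be decisive: $(X_1,Y_1,Y_1)$ should link the even parameter $a$ to the leading module constant $b_{11}$ coming from $\Psi_{1,1}^2$; the purely odd triple $(Y_1,Y_1,Y_1)$ should give $b_{21}=0$ and hence kill the $\Psi_{2,1}^3$-direction, forcing $\lambda_2=0$; and $(Y_1,Y_1,Y_2)$ should relate the two top-degree constants produced by $\Psi_{3,1}^4$. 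After these relations the free data reduce to the coefficients of $\Psi_{1,1}^2$ and $\Psi_{3,1}^4$ over the fixed background $\varphi_{1,2}$.

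I would then normalize by $\ZZ_2$-graded changes of basis, splitting cases according to whether the $\Psi_{1,1}^2$- and $\Psi_{3,1}^4$-directions are present. Simultaneous rescalings $X_0\mapsto\sigma X_0$, $X_1\mapsto\tau X_1$, $Y_1\mapsto\eta Y_1$, subject to $\eta^2=\sigma\tau$ so that $(Y_1,Y_1)=X_2$ is preserved, let me rescale the surviving nonzero coefficient to a canonical value, while the shift $X_1\mapsto X_1-aX_0$ is available to simplify the even part when needed. Carrying this out is intended to produce exactly the three representatives $L^{3,4}+\varphi_{1,2}$, $L^{3,4}+\varphi_{1,2}+\Psi_{3,1}^4$ and $L^{3,4}+\varphi_{1,2}+\Psi_{1,1}^2$ displayed in the statement.

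The genuine difficulty lies in the last step, establishing that the three laws are pairwise non-isomorphic, since distinct weight-$0$ combinations can define isomorphic superalgebras. Here I would isolate base-change invariants and compare them across the forms: natural candidates are the symmetric map $S^2\gg_{\bar 1}\to\gg_{\bar 0}$ read modulo the filtration, the rank and image of $\mathrm{ad}(X_1)|_{\gg_{\bar 1}}$ measured against $\mathrm{ad}(X_0)|_{\gg_{\bar 1}}$, and the way the center sits inside the central sequence. The delicate point is to control the full admissible base-change group $GL(\gg_{\bar 0})\times GL(\gg_{\bar 1})$ compatible with the gradation, and to check that no such transformation interchanges the three forms; this bookkeeping of the base-change group, rather than any single identity, is where I expect the real work to concentrate, in particular in ruling out that the $\Psi_{1,1}^2$-form collapses onto $L^{3,4}+\varphi_{1,2}$ under the admissible shift.
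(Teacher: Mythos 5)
Your overall strategy coincides with the paper's: fix $\gg_{\bar 0}=L_3$, observe that the natural gradation confines the law to the weight-zero cocycles, write $\gg=L^{3,4}+\Psi$ with $\Psi=\varphi_{1,2}+a_1\Psi_{1,1}^2+a_2\Psi_{2,1}^3+a_3\Psi_{3,1}^4$ (the coefficient of $\varphi_{1,2}$ normalized to $1$ by non-degeneracy), impose $\Psi\circ\Psi=0$, and normalize by adapted changes of basis. However, there is a concrete gap in the list of triples you propose as ``decisive''. The paper extracts two relations: $a_2=0$ from $\Psi\circ\Psi(Y_1,Y_1,Y_1)=0$ (which your triple $(Y_1,Y_1,Y_1)$ also gives), and the crucial $a_1a_3=0$ from $\Psi\circ\Psi(X_1,X_2,Y_1)=0$. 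None of your three triples produces this second relation: $(X_1,Y_1,Y_1)$ only involves products landing in degrees $\leq 3$, where $a_3$ does not appear, and a direct computation of $\Psi\circ\Psi(Y_1,Y_1,Y_2)$ gives $\bigl(a_2-a_3\bigr)+\tfrac12 a_3+\tfrac12 a_3=a_2$, i.e.\ it merely reproduces $a_2=0$. Without $a_1a_3=0$ your reduction leaves a genuine two-parameter family $\varphi_{1,2}+a_1\Psi_{1,1}^2+a_3\Psi_{3,1}^4$, and your subsequent case split ``according to whether the $\Psi_{1,1}^2$- and $\Psi_{3,1}^4$-directions are present'' would have to contend with a fourth case $a_1a_3\neq 0$; that case is not isomorphic to any of the three representatives --- it is simply not integrable, since $\Psi\circ\Psi(X_1,X_2,Y_1)=-a_1a_3Y_4\neq 0$. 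You need to add a triple of the form $\{X_i,X_j,Y_k\}$ (the paper's $\{X_1,X_2,Y_1\}$ works) to close the argument.

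Two smaller remarks. The extra even term $[X_1,X_2]=aX_3$ you allow is already excluded by the requirement that $\gg_{\bar 0}$ be a naturally graded filiform Lie algebra, hence isomorphic to $L_3$ (and it can in any case be absorbed by $X_1\mapsto X_1-aX_0$), so it plays no role. Finally, your last paragraph on pairwise non-isomorphism is work the paper does not carry out explicitly; the invariants you name (in particular the rank of $\mathrm{ad}(X_1)|_{\gg_{\bar 1}}$, which is $0$, $1$ and $3$ respectively for the three representatives) do distinguish them, so that part of the plan is sound, if more detailed than what the paper records.
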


\begin{proof}
We consider naturally graded filiform Lie superalgebras $\gg=\gg_{\bar 0} \oplus \gg_{\bar 1}$ with $\dim(\gg_{\bar 0})=4$ and $\dim(\gg_{\bar 1}) = 4.$ Therefore $\gg_{\bar 0}=L_3$ and $gr(\gg)$ is

$$\begin{array}{ccccccccc}
\underbrace{<X_0,X_1,Y_1>} & \oplus & \underbrace{<X_2, Y_2>} & \oplus & \underbrace{<X_3,Y_3>} & \oplus & \underbrace{<Y_4>}\\
\gg^1 & & \gg^2 & & \gg^3 & & \gg^4 &&
\end{array}$$

Thus, the bracket products can be considered to be 
$$\begin{array}{ll}
 \left\{\begin{array}{l}
   [X_0,X_i]=X_{i+1}, \ i = 1,2 \\[1mm]
   [X_0,Y_i]=Y_{i+1}, \ 1 \leq i \leq 3   \\[1mm]
   [X_1,Y_i]=b_{1i}Y_{i+1}, \ 1 \leq i \leq 3  \\[1mm]
   [X_2,Y_i]=b_{2i}Y_{i+2}, \ 1 \leq i \leq 2 \\[1mm]
\end{array}\right.&\begin{array}{ll}
   [X_3,Y_1]=b_{31}Y_4, \\[1mm]
   (Y_1,Y_1)=c_{11}X_2    \\[1mm]
   (Y_1,Y_2)=c_{12}X_3    \\[1mm]
\end{array} \end{array}$$

\noindent In a similar way to the previous case the structure constants can be considered to be determined by the cocycles $\Psi_{1,1}^2, \Psi_{2,1}^3$, $\Psi_{3,1}^4$ and $\varphi_{1,2}$ (see \cite{Bor07, complete}). In general, $\Psi_{1,1}^2$ and $\Psi_{2,1}^3$ have already been described in the above case, and $\Psi_{3,1}^4$ and $\varphi_{1,2}$ are as follows

$$\begin{array}{ll}
\begin{array}{l}
\Psi_{3,1}^4:\left\{\begin{array}{l}
[X_1,Y_3]=Y_{4} \\[1mm]
[X_2,Y_2]= -Y_4\\[1mm]
[X_3,Y_1]=Y_4 \\[1mm]
\end{array}\right.
\end{array}& \begin{array}{l}
\varphi_{1,2}:\left\{\begin{array}{l}
(Y_1,Y_1)=X_2 \\[1mm]
(Y_1,Y_2)=\frac{1}{2}X_3\\[1mm]
\end{array}\right.
\end{array} 
\end{array} 
$$

\noindent Thus, in order not to have a degenerated case, that is, a Lie algebra, the coefficient of 
$\varphi_{1,2}$ must be different from zero and then, it can be easily seen that there is no loss of generality in considering this coefficient equals  $1$.  Therefore we can consider
$$\begin{array}{ll}
&\\
L^{3,4}+\Psi: \left\{\begin{array}{l}
   [X_0,X_1]=X_2,  \\[1mm]
   [X_0,X_2]=X_3,  \\[1mm]
   [X_0,Y_{i}]=Y_{i+1}, \ 1 \leq i \leq 3,  \\[1mm]
   [X_1,Y_1]=a_1Y_2,\\[1mm]
   [X_1,Y_2]=(a_1 - a_2)Y_3, \\[1mm]
   [X_1,Y_3]=(a_1 - 2a_2 + a_3)Y_4, \\[1mm]
\end{array}\right.&\begin{array}{ll}
   [X_2,Y_1]=a_2Y_3 \\[1mm]
   [X_2,Y_2]=(a_2 - a_3)Y_4 \\[1mm]
   [X_3,Y_1]=a_3Y_4 \\[1mm]
   (Y_1,Y_1)=X_2    \\[1mm]
   (Y_1,Y_2)=\frac{1}{2}X_3    \\[1mm]
\end{array} \end{array}$$
being $\Psi=\varphi_{1,2} + a_1 \Psi_{1,1}^2 + a_2 \Psi_{2,1}^3 + a_3 \Psi_{3,1}^4$. From the condition $\Psi \circ \Psi(Y_1,Y_1,Y_1) = 0$ we obtain $a_2=0,$ and on account of $\Psi \circ \Psi(X_1,X_2,Y_1) = 0$ we conclude $a_1a_3 = 0$.
\begin{itemize}
\item If $a_1 = 0$ we distinguish two cases: $a_3 = 0$ or $a_3 \neq 0.$
\begin{itemize}
\item In the case of $a_3 = 0$ we obtain  $L^{3,4}+\varphi_{1,2}$.
\item If $a_3 \neq 0$ without loss of generality we can consider $a_3 = 1$ (by means of a simple change of scale) obtaining $L^{3,4}+\varphi_{1,2} + \Psi_{3,1}^4$.
\end{itemize}
\item If $a_1 \neq 0$ then $a_3 = 0.$ Therefore, applying the isomorphism (change of scale) defined by $X'_0=a_1X_0$, $X'_1=X_1$, $X'_2=a_1X_2$, $X'_3=a_1^2X_3$, $Y'_1= \sqrt{a_1}Y_1,$ $Y'_2=a_1\sqrt{a_1}Y_2$ and $Y'_3=a_1^2\sqrt{a_1}Y_3$, it can seen that there is no loss of generality in supposing $a_1=1,$ and we obtain $L^{3,4}+\varphi_{1,2} + \Psi_{1,1}^2$.
\end{itemize}
\end{proof}

%%%%%%%%%%%%%%%%%%%%%%%%%%%%%%%%%%%%%%%%%%%%%%%%%%%%%%%%%%%%%%
%%%%%%%%%%%%%%%%%%%%%%%%%%%%%%%%%%%%%%%%%%%%%%%%%%%%%%%%%%%%%%
\subsection{$n=3$ and $m \geq 5$}
%%%%%%%%%%%%%%%%%%%%%%%%%%%%%%%%%%%%%%%%%%%%%%%%%%%%%%%%%%%%%%
%%%%%%%%%%%%%%%%%%%%%%%%%%%%%%%%%%%%%%%%%%%%%%%%%%%%%%%%%%%%%%

Throughout this subsection we are going to study naturally graded filiform Lie superalgebras $\gg=\gg_{\bar 0} \oplus \gg_{\bar 1}$ with $\dim(\gg_{\bar 0})=n+1=4$ and $m \geq 5$ since the cases $1 \leq m \leq 4$ have already been studied. 

\begin{thm}
Let $\gg=\gg_{\bar 0} \oplus\gg_{\bar 1}$ be a non-degenerated filiform Lie superalgebra with $\dim(\gg_{\bar 0})=4$ and $\dim(\gg_{\bar 1})=m$ with $m \geq 5$. If $\gg$ is naturally graded then the law of $\gg$  is  isomorphic to one of the two non-isomorphic laws:

$$\begin{array}{ll}
\begin{array}{l}
L^{3,m}+\varphi_{1,2}: \left\{\begin{array}{l}
[X_0,X_i]=X_{i+1}\\[1mm]
[X_0,Y_{j}]=Y_{j+1} \\[1mm]
(Y_1,Y_1)=X_2   \\[1mm]
(Y_1,Y_2)=\frac{1}{2}X_3
   \end{array}\right.
   \end{array}            
                  &
                  \begin{array}{ll}
                  L^{3,m}+\varphi_{1,2} + \Psi_{1,1}^2: \left\{\begin{array}{ll}
   [X_0,X_i]=X_{i+1}&(Y_1,Y_1)=X_2 \\[1mm]
   [X_0,Y_{j}]=Y_{j+1}& (Y_1,Y_2)=\frac{1}{2}X_3  \\[1mm]
   [X_1,Y_j]=Y_{j+1}&
                        \end{array}\right.
                        \end{array}
                        \end{array}$$
\noindent with $1 \leq i \leq 2,$ $1 \leq j \leq m-1.$
\end{thm}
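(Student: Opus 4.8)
The plan is to mirror the arguments of the preceding subsections: determine the even part, parametrize the admissible deformations by the weight-$0$ cocycles, and then impose $\Psi\circ\Psi=0$ to fix the free parameters. Since $\dim(\gg_{\bar 0})=4$, since $\gg_{\bar 0}$ must be a naturally graded filiform Lie algebra, and since $Q_n$ only occurs for odd $n\geq 5$, the only possibility is $\gg_{\bar 0}=L_3$. Hence $gr(\gg)$ has the shape
$$<X_0,X_1,Y_1>\oplus<X_2,Y_2>\oplus<X_3,Y_3>\oplus<Y_4>\oplus\dots\oplus<Y_m>,$$
so that $\gg^2_{\bar 0}=<X_2>$, $\gg^3_{\bar 0}=<X_3>$ and $\gg^k_{\bar 0}=0$ for $k\geq 4$. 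The grading forces every mixed bracket to be of the form $[X_k,Y_j]\propto Y_{j+k}$, and among the symmetric pairings $S^2\gg_{\bar 1}\to\gg_{\bar 0}$ only $(Y_1,Y_1)\in<X_2>$ and $(Y_1,Y_2)\in<X_3>$ can be nonzero (degrees $\geq 4$ land in $0$). Exactly as in the case $n=3$, $m=4$, these data are those encoded by the four weight-$0$ cocycles, so I would write the law as $L^{3,m}+\Psi$ with $\Psi=\varphi_{1,2}+a_1\Psi_{1,1}^2+a_2\Psi_{2,1}^3+a_3\Psi_{3,1}^4$, normalizing the coefficient of $\varphi_{1,2}$ to $1$ since a vanishing coefficient would give a degenerate case (a Lie algebra).

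Next I would impose $\Psi\circ\Psi=0$ on a short list of triples, using the general-$m$ expressions $[X_3,Y_j]=a_3Y_{j+3}$, $[X_2,Y_j]=[a_2-a_3(j-1)]Y_{j+2}$ and $[X_1,Y_j]=[a_1-a_2(j-1)+a_3\frac{(j-1)(j-2)}{2}]Y_{j+1}$ (see \cite{complete}). Evaluating on $(Y_1,Y_1,Y_1)$ produces a multiple of $\Psi(X_2,Y_1)=a_2Y_3$ and hence $a_2=0$, exactly as for $m=4$. The decisive new ingredient for $m\geq 5$ is that the basis now reaches $Y_5$: evaluating $\Psi\circ\Psi$ on the triple $(X_1,X_3,Y_1)$ produces an element of $<Y_5>$, and a direct computation (using $\Psi(X_1,X_3)=0$, since $L_3$ has no even-even bracket in these degrees) gives $3a_3(a_2-a_3)Y_5$. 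After substituting $a_2=0$ this reads $-3a_3^2Y_5=0$, whence $a_3=0$. For $m=4$ the very same triple maps into $<Y_5>=0$, so the condition is vacuous and the direction $\Psi_{3,1}^4$ survives; this is precisely why an extra law appears there and not here.

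With $a_2=a_3=0$ only $\Psi=\varphi_{1,2}+a_1\Psi_{1,1}^2$ remains. If $a_1=0$ we obtain $L^{3,m}+\varphi_{1,2}$. If $a_1\neq 0$ I would apply the change of basis $X'_0=a_1X_0$, $X'_1=X_1$, $X'_2=a_1X_2$, $X'_3=a_1^2X_3$ and $Y'_j=a_1^{j-1}\sqrt{a_1}\,Y_j$ for $1\leq j\leq m$; this preserves all the model brackets and both pairings while rescaling the coefficient of $\Psi_{1,1}^2$ to $1$, yielding $L^{3,m}+\varphi_{1,2}+\Psi_{1,1}^2$. Finally, the two laws are non-isomorphic: the annihilator $\{x\in\gg_{\bar 0}:[x,\gg_{\bar 1}]=0\}$ of the $\gg_{\bar 0}$-module $\gg_{\bar 1}$ has dimension $3$ for $L^{3,m}+\varphi_{1,2}$ (only $X_0$ acts nontrivially) but dimension $2$ for $L^{3,m}+\varphi_{1,2}+\Psi_{1,1}^2$ (both $X_0$ and $X_1$ act nontrivially), and this dimension is preserved by any graded isomorphism.

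The step I expect to be the main obstacle is the cocycle bookkeeping for arbitrary $m$: one must work with the full general-$m$ forms of $\Psi_{2,1}^3$ and $\Psi_{3,1}^4$ rather than their truncations at $Y_4$, precisely so that the evaluation of $\Psi\circ\Psi$ on a triple reaching degree $5$ is available. It is exactly this reach into degree $5$ --- impossible when $m=4$ --- that rules out the $\Psi_{3,1}^4$ deformation and collapses the three families of the $m=4$ case to the two listed here.
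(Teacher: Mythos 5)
Your proposal is correct and follows essentially the same route as the paper: identify $\gg_{\bar 0}$ as the naturally graded filiform Lie algebra of dimension $4$, write the law as $L^{3,m}+\varphi_{1,2}+a_1\Psi_{1,1}^2+a_2\Psi_{2,1}^3+a_3\Psi_{3,1}^4$ with the coefficient of $\varphi_{1,2}$ normalized to $1$, and kill $a_2$ and $a_3$ via $\Psi\circ\Psi$ evaluated on $(Y_1,Y_1,Y_1)$ and $(X_1,X_3,Y_1)$ — exactly the two triples the paper uses — before rescaling $a_1$ to $0$ or $1$. Your additions (the explicit value $3a_3(a_2-a_3)Y_5$, the remark that this triple is vacuous when $m=4$, and the annihilator-dimension argument for non-isomorphy) are correct refinements of details the paper leaves implicit.
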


\begin{proof}
Let $\gg = \gg_{\bar 0} \oplus \gg_{\bar 1}$ be a naturally graded filiform Lie superalgebra with $\dim(\gg_{\bar 0})=4$ and $\dim(\gg_{\bar 1}) = m \geq 5.$ Therefore, $\gg_{\bar 0} = L_4$ and $gr(\gg)$ is

\

\noindent $\begin{array}{ccccccccccc}
\underbrace{<X_0,X_1,Y_1>} & \oplus & \underbrace{<X_2, Y_2>} & \oplus & \underbrace{<X_3,Y_3>} & \oplus & \underbrace{<Y_4>} & \oplus & \dots & \oplus & \underbrace{<Y_m>} \\
\gg^{1} & &  \gg^{2} & &  \gg^{3} & & \gg^{4} & & & &  \gg^{m}
\end{array}$

\

So the bracket products are

\

$\begin{array}{lll}
[X_0,X_1]=X_2,  &  & [X_2,Y_i]=b_{2i}Y_{i+2}, \ 1 \leq i \leq m-2 \\[1mm]
[X_0,X_2]=X_3,  &  & [X_3,Y_i]=b_{3i}Y_{i+3}, \ 1 \leq i \leq m-3 \\[1mm]
[X_0,Y_i]=Y_{i+1}, \ 1 \leq i \leq m-1,   & &      (Y_1,Y_1)=c_{11}X_2    \\[1mm]
[X_1,Y_i]=b_{1i}Y_{i+1}, \ 1\leq i \leq m-1,  & &    (Y_1,Y_2)=c_{12}X_3
\end{array}$ 
                        
\

\noindent As in the above case and in order not to have any degenerated case the coefficient of $\varphi_{1,2}$ must be different from zero and then, it can be easily seen that there is no loss of generality in considering this coefficient equals  $1$. Therefore we can consider $L^{3,m} + \Psi,$ with $\Psi=\varphi_{1,2} + a_1 \Psi_{1,1}^2 + a_2 \Psi_{2,1}^3 + a_3 \Psi_{3,1}^4,$ and we have the following products: 

$$L^{3,m}+\Psi: \left\{\begin{array}{l}
[X_0,X_1]=X_2  \\[1mm]
[X_0,X_2]=X_3  \\[1mm]
[X_0,Y_{i}]=Y_{i+1}, \ 1 \leq i \leq m-1   \\[1mm]
[X_1,Y_1]=a_1Y_2,\\[1mm]
[X_1,Y_i]=\bigl(a_1 - (i-1)a_2 + \frac{(i-1)(i-2)}{2}a_3\bigr)Y_{i+1}, \ 2 \leq i \leq m-1 \\[1mm]
[X_2,Y_i]=\bigl(a_2 - (i-1)a_3\bigr)Y_{i+2}, \ 1 \leq i \leq m-2 \\[1mm]
[X_3,Y_i]=a_3Y_{i+3}, \ 1 \leq i \leq m-3 \\[1mm]
(Y_1,Y_1)=X_2    \\[1mm]
(Y_1,Y_2)=\frac{1}{2}X_3    \\[1mm]
\end{array}\right.$$

\noindent  The condition $\Psi \circ \Psi(Y_1,Y_1,Y_1) = 0$ leads to $a_2 = 0$ and then, from the condition $\Psi \circ \Psi(X_1,X_3,Y_1) = 0$ it is obtained that $a_3=0$.  So if $a_1 = 0$ we have $L^{3,m}+\varphi_{1,2}$. Otherwise we have $L^{3,4}+\varphi_{1,2} + \Psi_{1,1}^2$ after applying a simple change of scale.
\end{proof}

%%%%%%%%%%%%%%%%%%%%%%%%%%%%%%%%%%%%%%%%%%%%%%%%%%%%%%%%%%%%%%
%%%%%%%%%%%%%%%%%%%%%%%%%%%%%%%%%%%%%%%%%%%%%%%%%%%%%%%%%%%%%%
\subsection{$n$ and $m$ arbitrary with $n>2m$}
%%%%%%%%%%%%%%%%%%%%%%%%%%%%%%%%%%%%%%%%%%%%%%%%%%%%%%%%%%%%%%
%%%%%%%%%%%%%%%%%%%%%%%%%%%%%%%%%%%%%%%%%%%%%%%%%%%%%%%%%%%%%%

\begin{thm}
There is not any non-degenerated naturally graded filiform Lie superalgebra $\gg = \gg_{\bar 0} \oplus \gg_{\bar 1}$ with the condition $n>2m$, being $\dim(\gg_{\bar 0})=n+1$ and $\dim(\gg_{\bar 1})=m$. 
\end{thm}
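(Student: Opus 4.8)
The plan is to proceed exactly as in the previous subsections, but now exploiting the inequality $n>2m$ to force a contradiction. Since $\gg$ is naturally graded and filiform, $\gg_{\bar 0}$ is a naturally graded filiform Lie algebra, so $\gg_{\bar 0}=L_n$ or $\gg_{\bar 0}=Q_n$. I would write $\gg = L^{n,m}+\Psi$ where $\Psi$ is an even infinitesimal deformation of weight $0$, i.e. a linear combination of the ``naturally graded'' cocycles $\varphi_{1,2}$ (giving the symmetric pairing $(Y_i,Y_j)\to\gg_{\bar 0}$, forced nonzero to avoid degeneration), together with the odd-module cocycles $\Psi^{s}_{k,1}$ of weight $s-k-1=0$, that is $\Psi^{k+1}_{k,1}$, which prescribe $[X_k,Y_i]=(\cdots)Y_{i+k}$. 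The crucial structural point under $n>2m$ is a \emph{length mismatch}: the $\gg_{\bar 0}$-module $\gg_{\bar 1}$ has length only $m$ (the chain $Y_1,\dots,Y_m$ under $X_0$ dies after $m$ steps), whereas the even part $\gg_{\bar 0}$ has length $n$ with $n>2m$. I expect this to be the source of the contradiction.

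\textbf{Key steps.} First I would record the general graded form of the brackets, separating the three blocks $A$, $B$, $C$ of $Z^2_0$ as in the preliminaries: block $C$ supplies $(Y_i,Y_j)\in\gg_{\bar 0}$, block $B$ supplies $[X_k,Y_i]\in\gg_{\bar 1}$, and block $A$ (acting only on $\gg_{\bar 0}$) must reproduce either $L_n$ or $Q_n$. Second, I would analyze the top of the module chain: the symmetric pairing $C$ maps $S^2\gg_{\bar 1}$ into $\gg_{\bar 0}$, and by the weight-$0$ condition $(Y_i,Y_j)$ lands in $\gg^{i+j}_{\bar 0}=\langle X_{i+j}\rangle$. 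Since $\gg_{\bar 1}$ is spanned by $Y_1,\dots,Y_m$, the largest index produced this way is $X_{2m-1}$ (from $(Y_{m-1},Y_m)$ or $(Y_m,Y_m)\to X_{2m}$ at most, up to $X_{2m}$). Hence the image of $C$ lies in $\langle X_2,\dots,X_{2m}\rangle$, so the subspace $\langle X_{2m+1},\dots,X_n\rangle$ of $\gg_{\bar 0}$, which is nonempty precisely because $n>2m$, is never reached by any bracket involving the odd part.

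\textbf{The main obstacle and the contradiction.} The hard part is showing that the unreachable top of $\gg_{\bar 0}$ is incompatible with $\gg$ being genuinely a superalgebra rather than a Lie algebra, and with the grading. Here I would argue as follows. If $\gg_{\bar 0}=L_n$, then the even law alone realizes $\langle X_{2m+1},\dots,X_n\rangle$ through the central chain $[X_0,X_i]=X_{i+1}$, which is fine for $\gg_{\bar 0}$ on its own; the contradiction instead must come from the nondegeneracy/filiform requirement on the \emph{super} structure together with the $\Psi\circ\Psi=0$ quadratic relations. Concretely, I would impose the master equations $\Psi\circ\Psi(x,y,z)=0$ on the triples that mix $C$ and $B$ (for instance $\Psi\circ\Psi(X_k,X_l,Y_1)$ and $\Psi\circ\Psi(Y_1,Y_1,Y_1)$), and show that when $n>2m$ these force all the weight-$0$ module coefficients in $B$ to vanish and, simultaneously, force $\varphi_{1,2}$ to vanish as well, contradicting $c_{11}\neq0$. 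The case $\gg_{\bar 0}=Q_n$ is handled by the parallel observation that $Q_n$ requires the brackets $[X_i,X_{n-i}]=(-1)^iX_n$ to feed $X_n$, but the compatibility cocycle conditions linking block $A$ to blocks $B,C$ cannot be met once $n>2m$ leaves $X_n$ outside the reach of the module pairing, again forcing degeneration. I expect the genuinely delicate bookkeeping to be in verifying that \emph{every} admissible weight-$0$ deformation collapses; this is the step that requires the explicit cocycle formulas from \cite{Bor07, complete, JGP3} and a careful indexing argument, rather than a single clean identity.
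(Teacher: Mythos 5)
There is a genuine gap, and it lies exactly where you place the ``delicate bookkeeping'' that you defer. You assume from the outset that the weight-zero symmetric-pairing cocycle $\varphi_{1,2}$ (and more generally the block $C$ of weight-zero cocycles) exists, and you propose to derive the contradiction by showing that the quadratic integrability relations $\Psi\circ\Psi=0$ force its coefficient $c_{11}$ to vanish. That mechanism cannot work: if $\Psi=c\,\varphi_{1,2}$ with all block-$B$ coefficients zero, then every term of $\Psi\circ\Psi$ has the form $\varphi_{1,2}(\varphi_{1,2}(\cdot,\cdot),\cdot)$, which vanishes identically because $\varphi_{1,2}$ is supported only on $S^2\gg_{\bar 1}$ and its output is even; so $\Psi\circ\Psi=0$ holds for every $c$ and places no constraint on $c_{11}$. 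The quadratic relations only ever produce products such as $a_2c=0$, which kill $c$ only in the presence of a nonzero module coefficient that nothing guarantees.

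The actual obstruction is \emph{linear}, not quadratic, and it is the entire content of the paper's one-line proof: by Proposition~6.4 of \cite{Bor07} one always has $\max\{\frac{n-2m-1}{2},\,n-2m\}\le \mathrm{weight}(\varphi_{k,s})$, so for $n>2m$ the component $C=Z^2(L^{n,m},L^{n,m})\cap \mathrm{Hom}(S^2\gg_{\bar 1},\gg_{\bar 0})$ contains no cocycle of weight $0$ at all; hence the symmetric pairing of any naturally graded deformation is zero and the superalgebra is degenerated. Your ``length mismatch'' observation is the right intuition for \emph{why} that proposition holds: the cocycle condition on the triples $(X_0,Y_i,Y_j)$ gives $[X_0,\varphi(Y_i,Y_j)]=\varphi(Y_{i+1},Y_j)+\varphi(Y_i,Y_{j+1})$, and since the odd chain stops at $Y_m$ while $X_{2m+1},\dots,X_n$ are still nonzero when $n>2m$, a downward induction on $i+j$ starting from $(Y_m,Y_m)$ kills every coefficient $c_{ij}$. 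If you replace your appeal to $\Psi\circ\Psi=0$ by this linear recursion (equivalently, by citing the weight bound), the proof closes; as written, the decisive step is both unproved and attributed to equations that are incapable of proving it. The remarks about the unreachable span $\langle X_{2m+1},\dots,X_n\rangle$ and about $Q_n$ are then superfluous.
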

\begin{proof}
This result derives from the fact that there is not any cocycle $\varphi_{k,s}$ naturally graded, that is, of weight equals  $0$ since we have always $\max\{\frac{n-2m-1}{2}, n-2m\} \leq $ weight$(\varphi_{k,s})$  (see \cite{Bor07}, Proposition 6.4).   
\end{proof}
\begin{rem}
We have as particular cases of the above theorem  $m=1, n \geq 3$; $m=2, n\geq 5$ and $m=3, n\geq 7$. Consequently,  and in order to study all the posibilities for $m\leq 3,$ only remains to consider the cases $4 \leq n \leq 6$ and $m=3$.
\end{rem} 

%%%%%%%%%%%%%%%%%%%%%%%%%%%%%%%%%%%%%%%%%%%%%%%%%%%%%%%%%%%%%%
%%%%%%%%%%%%%%%%%%%%%%%%%%%%%%%%%%%%%%%%%%%%%%%%%%%%%%%%%%%%%%
\subsection{$n=4$ and $m = 3$}
%%%%%%%%%%%%%%%%%%%%%%%%%%%%%%%%%%%%%%%%%%%%%%%%%%%%%%%%%%%%%%
%%%%%%%%%%%%%%%%%%%%%%%%%%%%%%%%%%%%%%%%%%%%%%%%%%%%%%%%%%%%%%

\begin{thm}
Let $\gg=\gg_{\bar 0} \oplus\gg_{\bar 1}$ be a non-degenerated filiform Lie superalgebra with $\dim(\gg_{\bar 0})=5$ and $\dim(\gg_{\bar 1})=3$. If $\gg$ is naturally graded then the law of $\gg$  is  isomorphic to one of the following non-isomorphic laws:
$$\begin{array}{ll}
\begin{array}{l}
L^{4,3} + \varphi_{2,4}:\\ \\
 \left\{\begin{array}{ll}
[X_0,X_i]=X_{i+1} & (Y_1,Y_3)=-X_4\\[1mm]
[X_0,Y_j]=Y_{j+1} & (Y_2,Y_2)=X_4
\end{array}\right.\end{array}            
                  &
\begin{array}{l}
L^{4,3} + \varphi_{2,4} + \Psi_{1,1}^2 + 2\Psi_{2,1}^3:\\ \\
\left\{\begin{array}{ll}
[X_0,X_i]=X_{i+1}, &  [X_2,Y_1]=Y_3 \\[1mm]
[X_0,Y_j]=Y_{j+1}, &  (Y_1,Y_3)=-X_4 \\[1mm]
[X_1,Y_1]=Y_2  &  (Y_2,Y_2)=X_4 \\[1mm] 
[X_1,Y_2]=-Y_3 & \\[1mm]
\end{array}\right. \end{array}
\end{array}$$
   $$\begin{array}{ll}
\begin{array}{l}
L^{4,3} + \varphi_{1,2} + t\varphi_{2,4}:\\ \\
\left\{\begin{array}{ll}
[X_0,X_i]=X_{i+1},&(Y_1,Y_2)=\frac{1}{2}X_3 \\[1mm]
[X_0,Y_j]=Y_{j+1}, & (Y_1,Y_3)=(\frac{1}{2} -t)X_4 \\[1mm]
(Y_1,Y_1)=X_2, &(Y_2,Y_2)=tX_4
\end{array}\right.\end{array}
   &
\begin{array}{l}
L^{4,3} + 4\varphi_{1,2} + \varphi_{2,4}+ \Psi_{1,1}^2:\\ \\
\left\{\begin{array}{ll}
[X_0,X_i]=X_{i+1},&(Y_1,Y_1)=4X_2  \\[1mm]
[X_0,Y_j]=Y_{j+1},&(Y_1,Y_2)=2X_3   \\[1mm]
[X_1,Y_1]=Y_2,&(Y_1,Y_3)=X_4 \\[1mm]
[X_1,Y_2]=Y_3,&(Y_2,Y_2)=X_4
\end{array}\right.\end{array}
\end{array}$$
with $1 \leq i \leq 3 $, $1 \leq j \leq 2$ and  $t \in \mathbb{C}$  a parameter.
\end{thm}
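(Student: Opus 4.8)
The plan is to follow exactly the same deformation-and-normalize strategy used in the preceding theorems, but now with $n=4$, $m=3$, where the additional cocycle $\varphi_{2,4}$ of weight $0$ becomes available and is responsible for the richer classification. First I would invoke the remark that $\gg_{\bar 0}$ is a naturally graded filiform Lie algebra of dimension $5$; since $n=4$ is even, $Q_4$ does not occur, so $\gg_{\bar 0}=L_4$. This pins down $gr(\gg)$ to the chain $\langle X_0,X_1,Y_1\rangle\oplus\langle X_2,Y_2\rangle\oplus\langle X_3,Y_3\rangle\oplus\langle X_4\rangle$, and the graded-compatibility condition $[\gg^i,\gg^j]\subset\gg^{i+j}$ forces the bracket products into a parametrized shape. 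The novelty here, compared with the $n=3$ cases, is that the symmetric pairing $S^2\gg_{\bar 1}\to\gg_{\bar 0}$ can now land in both $\gg^2$ (via $(Y_1,Y_1)\in\langle X_2\rangle$) and $\gg^4$ (via $(Y_2,Y_2),(Y_1,Y_3)\in\langle X_4\rangle$), so both $\varphi_{1,2}$ and $\varphi_{2,4}$ are admissible weight-$0$ cocycles and at least one coefficient must be nonzero to avoid a degenerate (purely Lie-algebra) case.

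Next I would write the candidate law as $L^{4,3}+\Psi$ with
\[
\Psi=\alpha\,\varphi_{1,2}+\beta\,\varphi_{2,4}+a_1\Psi_{1,1}^2+a_2\Psi_{2,1}^3+a_3\Psi_{3,1}^4,
\]
listing all the structure constants exactly as the earlier proofs do, with the odd-sector constants on $[X_k,Y_j]$ determined linearly by $a_1,a_2,a_3$ in the same pattern established above, and the symmetric products determined by $\alpha,\beta$. Because $\varphi_{2,4}$ appears, I expect extra structure constants of the form $(Y_1,Y_3)$ and $(Y_2,Y_2)$ that are not present when $m\le 2$; their normalizations (e.g.\ the $-X_4$ versus $X_4$ in the first family, and the $(\tfrac12-t)X_4$ in the third) will be fixed by the super Jacobi identity itself. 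The engine of the proof is then the nilpotency condition $\Psi\circ\Psi=0$, evaluated on a carefully chosen finite set of triples drawn from $\{X_0,X_1,X_2,X_3,Y_1,Y_2,Y_3\}$. I would evaluate $\Psi\circ\Psi$ on the purely-odd triple $(Y_1,Y_1,Y_1)$, on mixed triples such as $(X_1,X_2,Y_1)$, $(X_1,X_3,Y_1)$, $(X_2,X_2,Y_1)$, and on $(Y_1,Y_1,Y_2)$, to extract the polynomial relations among $\alpha,\beta,a_1,a_2,a_3$.

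The hard part will be the bookkeeping in this constraint-solving step: unlike the $n=3$ cases, where the relations collapse almost immediately to $a_2=0$ and $a_1a_3=0$, here the interaction between the two symmetric cocycles $\varphi_{1,2},\varphi_{2,4}$ and the odd-sector deformations produces a branching system whose solution set splits into the four inequivalent families in the statement. I anticipate a case split governed first by whether $\alpha=0$ (forcing reliance on $\varphi_{2,4}$, giving the two left-hand families, one with all $a_i=0$ and one with $a_1\neq 0$ that normalizes to $\Psi_{1,1}^2+2\Psi_{2,1}^3$) or $\alpha\neq 0$ (giving the two right-hand families, the generic one-parameter family $L^{4,3}+\varphi_{1,2}+t\varphi_{2,4}$ and the sporadic $4\varphi_{1,2}+\varphi_{2,4}+\Psi_{1,1}^2$). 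The genuine parameter $t\in\CC$ survives precisely because $\Psi\circ\Psi=0$ imposes no relation between the $\varphi_{1,2}$- and $\varphi_{2,4}$-coefficients once the odd deformations vanish.

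Finally, I would close by discharging the normalizations and the pairwise non-isomorphy claim. Each surviving nonzero coefficient is scaled to the stated value by an explicit diagonal change of adapted basis $X_0'=\mu X_0,\ X_i'=\mu^{i}X_i,\ Y_j'=\nu\,\mu^{\,j-1}Y_j$, choosing $\mu,\nu$ to fix the leading symmetric coefficient (this is what produces the $4X_2,2X_3$ normalization in the last family and the single parameter $t$ in the third). To establish that the four families, and the members of the $t$-family for distinct $t$, are genuinely non-isomorphic, I would exhibit a discrete invariant preserved by $G(V)$-isomorphisms — most cleanly the dimension of the odd part of the $\gg_{\bar 0}$-action, i.e.\ whether $[\gg_{\bar 0}^{(1)},\gg_{\bar 1}]$ reaches $\langle X_4\rangle$, equivalently which of the $\Psi^s_{k,1}$ are forced nonzero — together with checking that the ratio defining $t$ is an isomorphism invariant. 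I expect the non-isomorphy verification, rather than the cocycle computation, to require the most care, since it must rule out all admissible rescalings simultaneously.
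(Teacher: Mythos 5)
Your proposal follows essentially the same route as the paper: write the law as $L^{4,3}+\Psi$ with $\Psi$ a combination of the weight-zero cocycles, impose $\Psi\circ\Psi=0$ on a few well-chosen triples (the paper uses $(Y_1,Y_1,Y_1)$ and $(X_1,Y_1,Y_2)$, yielding $a_2c=0$ and $a_1d+(a_2-a_1)(\frac{c}{2}-d)=0$), and then split on whether the $\varphi_{1,2}$-coefficient vanishes, exactly the branching you anticipate, finishing with changes of scale. The only slip is that your ansatz includes $a_3\Psi_{3,1}^4$, which does not exist for $m=3$ (all of its values land in a nonexistent $Y_4$), so the paper's deformation carries only the four parameters $c,d,a_1,a_2$; this is harmless since $a_3$ would simply never enter any relation.
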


\begin{proof}
Let $\gg=\gg_{\bar 0} \oplus \gg_{\bar 1}$ be a naturally graded filiform Lie superalgebra with $\dim(\gg_{\bar 0})=5$ and $\dim(\gg_{\bar 1}) = 3.$ Therefore, $\gg_{\bar 0}=L_4$ and $gr(\gg)$ is
$$\begin{array}{ccccccc}
\underbrace{<X_0,X_1,Y_1>} & \oplus & \underbrace{<X_2, Y_2>} & \oplus & \underbrace{<X_3,Y_3>} & \oplus & \underbrace{<X_4>}  \\
\gg^{1} & &  \gg^{2} & &  \gg^{3} & & \gg^{4}
\end{array}$$

In an analogous way to previous cases, we can consider $L^{4,3} + \Psi,$ with $\Psi = c\varphi_{1,2} + d\varphi_{2,4} + a_1\Psi_{1,1}^2 + a_2 \Psi_{2,1}^3,$ and we have the following products:
$$\begin{array}{ll}
L^{3,4}+\Psi: \left\{\begin{array}{l}
[X_0,X_i]=X_{i+1}, \ 1 \leq i \leq 3,  \\[1mm]
[X_0,Y_i]=Y_{i+1}, \ 1 \leq i \leq 2,   \\[1mm]
[X_1,Y_1]=a_1Y_2,\\[1mm]
[X_1,Y_2]=(a_1-a_2)Y_3, \\[1mm]
[X_2,Y_1]=a_2Y_3
\end{array}\right.&\begin{array}{ll}
(Y_1,Y_1)=cX_2    \\[1mm]
(Y_1,Y_2)=\frac{c}{2}X_3    \\[1mm]
(Y_1,Y_3)=(\frac{c}{2} -d)X_4    \\[1mm]
(Y_2,Y_2)=dX_4
\end{array}  \end{array}$$

\noindent where at least $c$ or $d$ is non-zero. The condition $\Psi \circ \Psi(Y_1,Y_1,Y_1) = 0$ give us $a_2c = 0$ and, since $\Psi \circ \Psi(X_1,Y_1,Y_2) = 0,$ we have $a_1d + (a_2-a_1)(\frac{c}{2}-d) = 0$.
\begin{itemize}
\item If $c = 0$ then $d \neq 0$, and from $a_1d + (a_2-a_1)(\frac{c}{2}-d) = 0$ we have $a_2 = 2a_1$.
\begin{itemize}
\item For $a_1 = 0$ is $a_2 = 0$ and $d \neq 0,$ so using a change of scale $d = 1$ and we obtain $L^{4,3} + \varphi_{2,4}$.
\item For $a_1 \neq 0$ again $d \neq 0$, and with a similar change of scale $a_1 = d = 1$ and $a_2 = 2,$ obtaining $L^{4,3} + \varphi_{2,4} + \Psi_{1,1}^2 + 2\Psi_{2,1}^3$.
\end{itemize}

\item In the case of $c \neq 0$ then necessarily $a_2 = 0$ and from $a_1d + (a_2-a_1)(\frac{c}{2}-d) = 0$ is $4a_1d = a_1c$.
\begin{itemize}
\item For $a_1 = 0$  by means of the change of scale $c = 1, d = t,$ being $t$ a parameter,  we  obtain $L^{4,3} + \varphi_{1,2} + t\varphi{2,4}$.
\item For $a_1 \neq 0$ is $d \neq 0$ and with the change of scale $a_1 = d = 1, a_2 = 0$ and $c = 4$  we obtain $L^{4,3} + 4\varphi_{1,2} + \varphi_{2,4} +\Psi_{1,1}^2$.
\end{itemize}
\end{itemize}
\end{proof}

%%%%%%%%%%%%%%%%%%%%%%%%%%%%%%%%%%%%%%%%%%%%%%%%%%%%%%%%%%%%%%
%%%%%%%%%%%%%%%%%%%%%%%%%%%%%%%%%%%%%%%%%%%%%%%%%%%%%%%%%%%%%%
\subsection{$n=5$ and $m = 3$}
%%%%%%%%%%%%%%%%%%%%%%%%%%%%%%%%%%%%%%%%%%%%%%%%%%%%%%%%%%%%%%
%%%%%%%%%%%%%%%%%%%%%%%%%%%%%%%%%%%%%%%%%%%%%%%%%%%%%%%%%%%%%%

\begin{thm}
Let $\gg=\gg_{\bar 0} \oplus \gg_{\bar 1}$ be a non-degenerated filiform Lie superalgebra with $\dim(\gg_{\bar 0})=6$ and $\dim(\gg_{\bar 1})=3$. If $\gg$ is naturally graded then the law of $\gg$ will be isomorphic to the following law

$$\begin{array}{ll}
L^{5,3} + \overline{\varphi}_{2,4}: \left\{\begin{array}{l}
[X_0,X_i]=X_{i+1}, \ 1 \leq i \leq 4, \\[1mm]
[X_0,Y_i]=Y_{i+1}, \ 1 \leq i \leq 2, \\[1mm]
(Y_1,Y_1)=3X_2,    \\[1mm]
(Y_1,Y_2)=\frac{3}{2}X_3,
\end{array}\right.&\begin{array}{ll}
(Y_1,Y_3)=\frac{1}{2}X_4    \\[1mm]
(Y_2,Y_2)=X_4               \\[1mm]
(Y_2,Y_3)=\frac{1}{2}X_5    \\[1mm]
\end{array}\end{array}$$
\end{thm}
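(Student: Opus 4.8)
The plan is to follow the normalization scheme of the previous subsections, the one genuinely new point being that $n=5$ is the first index for which $Q_n$ exists, so one can no longer simply assert $\gg_{\bar 0}=L_n$. Since $\gg_{\bar 0}$ is a naturally graded filiform Lie algebra of dimension $6$ and $n=5$ is odd with $n\geq 5$, the preliminary discussion leaves exactly two options, $\gg_{\bar 0}=L_5$ or $\gg_{\bar 0}=Q_5$. First I would fix an adapted basis realizing the displayed $gr(\gg)$, so that $\gg^4=<X_4>$ and $\gg^5=<X_5>$, and write the most general weight-$0$ law respecting this grading: a module part $[X_k,Y_i]=b_{ki}Y_{i+k}$ (for $m=3$ only $b_{11},b_{12},b_{21}$ can survive) together with a symmetric pairing $(Y_i,Y_j)=c_{ij}X_{i+j}$ for $i+j\leq 5$, with $(Y_3,Y_3)=0$ forced by $\gg^6=0$.

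The decisive step, which I expect to be the main obstacle, is to discard the case $\gg_{\bar 0}=Q_5$. I would test the super Jacobi identity on the triple $(X_3,Y_1,Y_1)$. Since $\mathrm{ad}\,X_3$ is an even derivation, this reads $[X_3,(Y_1,Y_1)]=[[X_3,Y_1],Y_1]+[Y_1,[X_3,Y_1]]$; but $m=3$ forces $[X_3,Y_1]=0$ (there is no $Y_4$), so the right-hand side vanishes and we are left with $c_{11}[X_3,X_2]=0$. In $Q_5$ one has $[X_3,X_2]=-X_5\neq 0$, whence $c_{11}=0$. The first-order relations derived below give $c_{11}=6c_{23}$, so $c_{11}=0$ propagates to $c_{ij}=0$ for all $i,j$, i.e. the pairing vanishes and $\gg$ degenerates to a Lie algebra. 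Hence no non-degenerated example has $\gg_{\bar 0}=Q_5$, and from now on $\gg_{\bar 0}=L_5$.

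With $\gg_{\bar 0}=L_5$ I would extract the first-order (cocycle) constraints by running the super Jacobi identity on the triples $(X_0,Y_i,Y_j)$. Using $[X_0,(Y_i,Y_j)]=(Y_{i+1},Y_j)+(Y_i,Y_{j+1})$ together with $[X_0,X_{i+j}]=X_{i+j+1}$, which is nonzero precisely when $i+j\leq 4$, one obtains the recurrence $c_{ij}=c_{i+1,j}+c_{i,j+1}$ for $i+j\leq 4$. It is essential here that $[X_0,X_4]=X_5\neq 0$, which is exactly what was absent for $n=4$ and is what now rigidifies the pairing: the recurrence collapses everything to $c_{13}=c_{23}$, $c_{22}=2c_{23}$, $c_{12}=3c_{23}$, $c_{11}=6c_{23}$. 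Thus the pairing is one-dimensional, equal to $c_{23}\,\overline\varphi_{2,4}$; non-degeneracy gives $c_{23}\neq 0$, and rescaling the $Y_i$ normalizes $c_{23}=\frac12$, leaving $\Psi=\overline\varphi_{2,4}+a_1\Psi_{1,1}^2+a_2\Psi_{2,1}^3$.

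Finally I would remove the module deformations via the integrability conditions $\Psi\circ\Psi=0$, exactly as in the earlier cases. The triple $(Y_1,Y_1,Y_1)$ gives $3c_{11}\,[X_2,Y_1]=3c_{11}a_2 Y_3=0$, hence $a_2=0$ since $c_{11}=6c_{23}\neq 0$; the triple $(X_1,Y_1,Y_2)$ then contributes a multiple of $a_1(c_{22}-c_{13})=a_1c_{23}$, hence $a_1=0$. With $a_1=a_2=0$ only $L^{5,3}+\overline\varphi_{2,4}$ remains, and a direct check confirms it satisfies the remaining super Jacobi identities. The substantive content is concentrated in the elimination of $Q_5$ and in the observation that the degree-$4$ relations force the pairing to be one-dimensional; the rest is routine bookkeeping of the kind already performed for $n\leq 4$.
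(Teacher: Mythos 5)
Your proof is correct and its overall skeleton coincides with the paper's: fix the natural gradation, reduce the law to $L^{5,3}+c\,\overline{\varphi}_{2,4}+a_1\Psi_{1,1}^2+a_2\Psi_{2,1}^3$, kill $a_1,a_2$ by integrability, normalize $c$, and treat $\gg_{\bar 0}=L_5$ and $\gg_{\bar 0}=Q_5$ separately. You differ in two places. First, you derive the one-dimensionality of the pairing from scratch via the recurrence $c_{ij}=c_{i+1,j}+c_{i,j+1}$ coming from the triples $(X_0,Y_i,Y_j)$, whereas the paper simply invokes the known even cocycle $\overline{\varphi}_{2,4}=3\varphi_{1,2}+\varphi_{2,4}$ from the classification of infinitesimal deformations of $L^{n,m}$; your route is self-contained and makes explicit why the nonvanishing of $[X_0,X_4]$ is what rigidifies the pairing at $n=5$. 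Second, to exclude $Q_5$ the paper evaluates $\Psi\circ\Psi(X_1,Y_2,Y_2)=0$, which pairs $(Y_2,Y_2)=cX_4$ against $[X_1,X_4]=-X_5\neq 0$ and forces $c=0$ in one stroke; you instead apply the super Jacobi identity to $(X_3,Y_1,Y_1)$, pairing $(Y_1,Y_1)=c_{11}X_2$ against $[X_3,X_2]=-X_5\neq 0$ to get $c_{11}=0$, and then propagate this to the whole pairing through the recurrence (which is legitimate, since the recurrence only uses $[X_0,X_k]=X_{k+1}$ and this holds in $Q_5$ as well). Both obstructions are valid; the paper's choice of triple is marginally more economical, yours has the advantage of not needing the $\Psi\circ\Psi$ formalism at that point. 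The remaining bookkeeping ($a_2=0$ from $(Y_1,Y_1,Y_1)$, then $a_1=0$ from $(X_1,Y_1,Y_2)$) matches the paper up to the order of the two eliminations.
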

\begin{proof}
Let $\gg=\gg_{\bar 0} \oplus \gg_{\bar 1}$ be a naturally graded filiform Lie superalgebra with $dim(\gg_0)=6$ and $dim(\gg_1) = 3.$ Then $gr(\gg)$ is
$$\begin{array}{ccccccccc}
\underbrace{<X_0,X_1,Y_1>} & \oplus & \underbrace{<X_2, Y_2>} & \oplus & \underbrace{<X_3,Y_3>} & \oplus & \underbrace{<X_4>} & \oplus & \underbrace{<X_5>} \\
\gg^{1} & &  \gg^{2} & &  \gg^{3} & & \gg^{4} & & \gg^{5}
\end{array}$$

In this case we have to distinguish two separate sub-cases: $\gg_{\bar 0}=L_5$ or $\gg_{\bar 0}=Q_5$, the two possible naturally graded filiform Lie algebras for this dimension.

\begin{itemize}
\item If $\gg_{\bar 0}=L_5$, and as in previous cases we get $L^{5,3} + \Psi,$ with $\Psi = c\overline{\varphi}_{2,4} + a_1\Psi_{1,1}^2 + a_2\Psi_{2,1}^3,$ being $\overline{\varphi}_{2,4} = 3\varphi_{1,2} + \varphi_{2,4} $ and $c \neq 0$. So we have
$$\begin{array}{ll}
L^{5,3}+\Psi: \left\{\begin{array}{l}
[X_0,X_i]=X_{i+1}, \ 1 \leq i \leq 4, \\[1mm]
[X_0,Y_i]=Y_{i+1}, \ 1 \leq i \leq 2, \\[1mm]
[X_1,Y_1]=a_1Y_2,                     \\[1mm]
[X_1,Y_2]=(a_1-a_2)Y_3,				 \\[1mm]
[X_2,Y_1]=a_2Y_3,
\end{array}\right.&\begin{array}{ll}
(Y_1,Y_1)=3cX_2    \\[1mm]
(Y_1,Y_2)=\frac{3}{2}cX_3 \\[1mm]
(Y_1,Y_3)=\frac{c}{2}X_4    \\[1mm]
(Y_2,Y_2)=cX_4    \\[1mm]
(Y_2,Y_3)=\frac{c}{2}X_5    \\[1mm]
\end{array} \end{array}$$

\noindent The condition $\Psi \circ \Psi(X_1,Y_1,Y_2) = 0$ give us $a_1 = 0$. Since $\Psi \circ \Psi(Y_1,Y_1,Y_1) = 0$ we have $a_2 = 0.$ With a change of scale we can consider $c=1$ and we obtain $L^{5,3} + \overline{\varphi}_{2,4}$.

\item If $\gg_{\bar 0}=Q_5$ we have to add the brackets $[X_1,X_4]=-X_5$ and $[X_2,X_3]=X_5$. Indeed, these brackets stem from cocycles of \cite{JGP3}. All the above reasoning is also valid but from the condition $\Psi \circ \Psi(X_1,Y_2,Y_2) = 0$ we obtain that  $[X_1,X_4]=0$, which is clearly a contradiction. Thus, there is not any non-degenerated filiform Lie superalgebra deriving from the Lie algebra $Q_5$.
\end{itemize}
\end{proof}

%%%%%%%%%%%%%%%%%%%%%%%%%%%%%%%%%%%%%%%%%%%%%%%%%%%%%%%%%%%%%%
%%%%%%%%%%%%%%%%%%%%%%%%%%%%%%%%%%%%%%%%%%%%%%%%%%%%%%%%%%%%%%
\subsection{$n=6$ and $m = 3$}
%%%%%%%%%%%%%%%%%%%%%%%%%%%%%%%%%%%%%%%%%%%%%%%%%%%%%%%%%%%%%%
%%%%%%%%%%%%%%%%%%%%%%%%%%%%%%%%%%%%%%%%%%%%%%%%%%%%%%%%%%%%%%

\begin{thm}
Let $\gg=\gg_{\bar 0} \oplus\gg_{\bar 1}$ be a non-degenerated filiform Lie superalgebra with $\dim(\gg_{\bar 0})=7$ and $\dim(\gg_{\bar 1})=3$. If $\gg$ is naturally graded then the law of $\gg$  is isomorphic  to the following law

$$\begin{array}{ll}
L^{6,3} + \overline{\varphi}_{3,6}: \left\{\begin{array}{l}
[X_0,X_i]=X_{i+1}, \ 1 \leq i \leq 5, \\[1mm]
[X_0,Y_i]=Y_{i+1}, \ 1 \leq i \leq 2, \\[1mm]
(Y_1,Y_1)=6X_2,    \\[1mm]
(Y_1,Y_2)=3X_3,
\end{array}\right.&\begin{array}{ll}
(Y_1,Y_3)=X_4    \\[1mm]
(Y_2,Y_2)=2X_4    \\[1mm]
(Y_2,Y_3)=X_5    \\[1mm]
(Y_3,Y_3)=X_6
\end{array}  \end{array}$$
\end{thm}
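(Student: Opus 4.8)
The plan is to reproduce the strategy of the preceding cases, now at the extremal value $n=2m$ (here $n=6$, $m=3$), where weight-zero pairings $S^2\gg_{\bar 1}\to\gg_{\bar 0}$ first become available. First I would observe that $\dim(\gg_{\bar 0})=7$ forces $n=6$ and that $\gg_{\bar 0}$, being a naturally graded filiform Lie algebra, can only be $L_6$: since $Q_n$ occurs solely for odd $n\geq 5$, there is no $Q_6$ to treat and no even--even ($A$-component) deformation to carry along. Reading off $gr(\gg)$ from the $n>m$ template gives the layers $<X_0,X_1,Y_1>\oplus<X_2,Y_2>\oplus<X_3,Y_3>\oplus<X_4>\oplus<X_5>\oplus<X_6>$, which fixes the admissible shape of every graded bracket.

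Next I would determine the symmetric pairing. Writing $(Y_i,Y_j)=c_{ij}X_{i+j}$ for $1\leq i\leq j\leq 3$ and imposing the graded super-Jacobi identity on the triples $(X_0,Y_i,Y_j)$ --- equivalently, that $\mathrm{ad}\,X_0$ act as a derivation --- produces the recursion $c_{ij}=c_{i+1,j}+c_{i,j+1}$ for $i+j\leq 5$, where any $c_{k,l}$ with $k>3$ or $l>3$ is read as $0$, while the relation at $i+j=6$ is vacuous because $[X_0,X_6]=0$. Solving this recursion expresses every $c_{ij}$ in terms of the single parameter $t:=c_{33}$, giving $c_{33}=c_{23}=c_{13}=t$, then $c_{22}=2t$, $c_{12}=3t$, $c_{11}=6t$; thus the $C$-component is one-dimensional, non-degeneracy forces $t\neq 0$, and rescaling to $t=1$ yields precisely $\overline{\varphi}_{3,6}$. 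For the odd ($B$-component) cocycles of weight $0$ I would note that only $\Psi_{1,1}^2$ and $\Psi_{2,1}^3$ survive at $m=3$, since $\Psi_{3,1}^4$ sends everything into the absent $Y_4$.

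With the candidate written as $L^{6,3}+\Psi$, $\Psi=c\,\overline{\varphi}_{3,6}+a_1\Psi_{1,1}^2+a_2\Psi_{2,1}^3$ with $c\neq 0$, the final step is to impose $\Psi\circ\Psi=0$. Evaluating on $(Y_1,Y_1,Y_1)$ isolates the chain $(Y_1,Y_1)=6cX_2$ followed by $[X_2,Y_1]=a_2Y_3$, a nonzero multiple of $a_2c$, hence $a_2=0$; evaluating on $(X_1,Y_1,Y_2)$ then gives a multiple of $(a_1+a_2)c$, so $a_1=0$ as well. The remaining triples (for example $(X_1,Y_2,Y_2)$ and $(X_2,Y_1,Y_1)$) are identically zero and add no constraint, so after normalization $\Psi=\overline{\varphi}_{3,6}$ and $\gg\cong L^{6,3}+\overline{\varphi}_{3,6}$.

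I expect the genuine difficulty to be twofold. The first is making the cocycle enumeration airtight at the boundary $n=2m$: this is exactly the threshold of Proposition~6.4 of \cite{Bor07} (where $\mathrm{weight}(\varphi_{k,s})\geq\max\{(n-2m-1)/2,\,n-2m\}=0$), so one must confirm that the recursion above captures the full space of graded pairings and that no extra graded odd cocycle enters. The second is the sign- and coefficient-bookkeeping in $\Psi\circ\Psi=0$, which, while routine, must be executed consistently with the stated skew-symmetric/symmetric conventions to arrive cleanly at $a_2=0$ and $a_1+a_2=0$.
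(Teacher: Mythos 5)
Your proposal is correct and follows essentially the same route as the paper: both reduce to $\gg_{\bar 0}=L_6$, write the candidate as $L^{6,3}+\Psi$ with $\Psi=c\,\overline{\varphi}_{3,6}+a_1\Psi_{1,1}^2+a_2\Psi_{2,1}^3$, and kill $a_2$ and $a_1$ via $\Psi\circ\Psi(Y_1,Y_1,Y_1)=0$ and $\Psi\circ\Psi(X_1,Y_1,Y_2)=0$ before rescaling $c=1$. The only (welcome) difference is that you rederive the one-dimensionality of the graded symmetric pairing and the coefficients $6,3,1,2,1,1$ from the recursion $c_{ij}=c_{i+1,j}+c_{i,j+1}$, where the paper simply cites the known weight-zero cocycle $\overline{\varphi}_{3,6}=6\varphi_{1,2}+2\varphi_{2,4}+\varphi_{3,6}$.
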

\begin{proof}
Let $\gg = \gg_{\bar 0} \oplus \gg_{\bar 1}$ be a naturally graded filiform Lie superalgebra with $\dim(\gg_{\bar 0})=5$ and $\dim(\gg_{\bar 1}) = 3.$ Therefore $\gg_{\bar 0}=L_6$ and $gr(\gg)$ is

$$\begin{array}{ccccccccccc}
\underbrace{<X_0,X_1,Y_1>} & \oplus & \underbrace{<X_2, Y_2> }& \oplus & \underbrace{<X_3,Y_3> }& \oplus & \underbrace{<X_4>}& \oplus & \underbrace{<X_5>} & \oplus & \underbrace{<X_6>} \\
\gg^{1} & &  \gg^{2} & &  \gg^{3} & & \gg^{4} & & \gg^{5} & & \gg^{6}
\end{array}$$

\noindent As in previous cases, since the products we get $L^{6,3} + \Psi,$ with $\Psi = c\overline{\varphi}_{3,6} + a_1\Psi_{1,1}^2 + a_2 \Psi_{2,1}^3,$ where $\overline{\varphi}_{3,6} = 6\varphi_{1,2} + 2\varphi_{2,4} + \varphi_{3,6}$, and we have these products:

$$\begin{array}{ll}
L^{6,3}+\Psi: \left\{\begin{array}{l}
[X_0,X_i]=X_{i+1}, \ 1 \leq i \leq 5,  \\[1mm]
[X_0,Y_i]=Y_{i+1}, \ 1 \leq i \leq 2,  \\[1mm]
[X_1,Y_1]=a_1Y_2,         \\[1mm]
[X_1,Y_2]=(a_1-a_2)Y_3,	  \\[1mm]
[X_2,Y_1]=a_2Y_3,    \\[1mm]
(Y_1,Y_1)=6cX_2,
\end{array}\right.&\begin{array}{ll}
(Y_1,Y_2)=3cX_3    \\[1mm]
(Y_1,Y_3)=cX_4     \\[1mm]
(Y_2,Y_2)=2cX_4    \\[1mm]
(Y_2,Y_3)=cX_5     \\[1mm]
(Y_3,Y_3)=cX_6
\end{array}  \end{array} $$

\noindent with $c$ non-zero to avoid the trivial case. The condition $\Psi \circ \Psi(Y_1,Y_1,Y_1) = 0$ give us $a_2 = 0$, and since $\Psi \circ \Psi(X_1,Y_1,Y_2) = 0$ we have $a_1 = 0.$  By using a change  of scale $c = 1$ we obtain $L^{6,3} + \overline{\varphi}_{3,6}$.
\end{proof}

%%%%%%%%%%%%%%%%%%%%%%%%%%%%%%%%%%%%%%%%%%%%%%%%%%%%%%%%%%%%%%%
%%%%%%%%%%%%%%%%%%%%%%%%%%%%%%%%%%%%%%%%%%%%%%%%%%%%%%%%%%%%%%%
\section{Naturally Graded  (non-Lie) Leibniz Superalgebras with maximal super-nilindex}
%%%%%%%%%%%%%%%%%%%%%%%%%%%%%%%%%%%%%%%%%%%%%%%%%%%%%%%%%%%%%%%
%%%%%%%%%%%%%%%%%%%%%%%%%%%%%%%%%%%%%%%%%%%%%%%%%%%%%%%%%%%%%%%

Throughout this section we are going to study the Leibniz superalgebras with super-nilindex  $(n,m)$  because of the fact that in this case the even part $L_{\bar 0}$ is a null-filiform Leibniz algebra and the odd part $L_{\bar 1}$ has structure of filiform $L_{\bar 0}-$module. Consequently, it seems to be the first case to consider the ``naturally graded" structure. Note that all these Leibniz superalgebras with super-nilindex  $(n,m)$ are non-Lie ones and  contain in particular the only type of Leibniz superalgebra single generated or so-called null-filiform Leibniz superalgebras (for more details see \cite{null-filiform}). It is not difficult to see that these last superalgebras, in the case of non-degenerated,  are not naturally graded.

\

Next, we are going to recall the general expression of the Leibniz superalgebras with super-nilindex  $(n,m)$ (see \cite{perdido}). 

\begin{thm}\label{adapted1}
If $L=L_{\bar 0}\oplus L_{\bar 1}$ is a Leibniz superalgebra of super-nilindex  $(n,m)$, \black then there exists an adapted basis of $L$, namely $\{X_1,X_2, \dots , X_n,
Y_1, Y_2, \dots, Y_m\}$, with $\{X_1,X_2, \dots , X_n\}$ a basis of $L_{\bar 0}$ and $\{Y_1, Y_2, \dots , Y_m\}$ a basis of $L_{\bar 1}$, such that
$$\begin{array}{llc}
[X_i,X_1]=X_{i+1},& 1 \leq i \leq n-1, & [X_{n},X_1]=0\\[1mm]
[Y_j,X_1]=Y_{j+1},& 1 \leq j \leq m-1,& [Y_m,X_1]=0 \\[1mm]
\end{array}$$
Moreover, $[Y_j,X_k]=0$ for $1 \leq j \leq m$ and  $2 \leq k \leq n$, and the omitted products of $L_{\bar 0}=<X_1,X_2, \dots, X_n>$ vanish.
\end{thm}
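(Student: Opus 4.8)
The plan is to derive the stated adapted‐basis form directly from the two structural facts established earlier in the excerpt: first, that for a Leibniz superalgebra of super‐nilindex $(n,m)$ the even part $L_{\bar 0}$ is a null‐filiform Leibniz algebra, and second, that the odd part $L_{\bar 1}$ carries the structure of a filiform $L_{\bar 0}$‐module via the right multiplication operators. Both facts follow from the discussion preceding Theorem~\ref{adapted1}, where Engel's theorem is applied to $R(L_{\bar 0})$ acting on $V_{\bar 1}$.

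First I would treat the even part. Since $L_{\bar 0}$ is null‐filiform (equivalently single‐generated, having maximal nilindex $n$), the classification of null‐filiform Leibniz algebras gives a basis $\{X_1,\dots,X_n\}$ and a generator $X_1$ with $[X_i,X_1]=X_{i+1}$ for $1\leq i\leq n-1$ and $[X_n,X_1]=0$, all other products among the $X_k$ vanishing. Concretely, one fixes $X_1$ to be a generator of $L_{\bar 0}$ modulo $[L_{\bar 0},L_{\bar 0}]$ and sets $X_{i+1}:=[X_i,X_1]=R_{X_1}^{i}(X_1)$; the chain is a Jordan basis for the single nilpotent operator $R_{X_1}$, and the vanishing of the remaining products is exactly the content of the null‐filiform classification. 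This handles the first displayed line and the final sentence of the statement.

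Next I would build the odd part as a module. By Engel's theorem applied to the nilpotent Lie algebra $R(L_{\bar 0})\subset GL(V_{\bar 1})$, there is a complete flag $V_0\subset V_1\subset\dots\subset V_m=V_{\bar 1}$ with $R(L_{\bar 0})(V_{i+1})\subset V_i$. The super‐nilindex being exactly $m$ on the odd side forces the operator $R_{X_1}$ restricted to $L_{\bar 1}$ to be a \emph{single} nilpotent Jordan block of length $m$: otherwise the descending sequence ${\cal C}^k(L_{\bar 1})$ would terminate before step $m$, contradicting maximality. Choosing $Y_1$ to generate this block and setting $Y_{j+1}:=[Y_j,X_1]=R_{X_1}^{j}(Y_1)$ yields $[Y_j,X_1]=Y_{j+1}$ for $1\leq j\leq m-1$ and $[Y_m,X_1]=0$, the second displayed line. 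The claim $[Y_j,X_k]=0$ for $2\leq k\leq n$ is the remaining assertion: I would prove it by induction on $k$ using the super Leibniz identity. Writing $X_k=[X_{k-1},X_1]$ and expanding $R_{X_k}=R_{[X_{k-1},X_1]}=R_{X_1}R_{X_{k-1}}-(-1)^{\bar 0\,\bar 0}R_{X_{k-1}}R_{X_1}=\langle R_{X_1},R_{X_{k-1}}\rangle$ inside the Lie superalgebra $R(L)$, one reduces $[Y_j,X_k]$ to iterated brackets involving $R_{X_1}$ and the base case $[Y_j,X_2]$; the action of $R(L_{\bar 0})$ on $V_{\bar 1}$ factors through the single generator $R_{X_1}$, so every $R_{X_k}$ with $k\geq 2$ annihilates $L_{\bar 1}$.

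The main obstacle I anticipate is the compatibility between the independently chosen generators $X_1$ (for the even chain) and $Y_1$ (for the odd block): one must ensure the \emph{same} element $X_1$ simultaneously generates the length‐$n$ Jordan chain on $L_{\bar 0}$ and the length‐$m$ Jordan block on $L_{\bar 1}$. This is where the definition of super‐nilindex via the two sequences $\mathcal{C}^k(L_{\bar 0})$ and $\mathcal{C}^k(L_{\bar 1})$—both built from $[L_{\bar 0},-]$—does the work, since both are controlled by right multiplication by elements of $L_{\bar 0}$, and maximality on each side pins down $R_{X_1}$ as a single full‐length nilpotent block in each representation. Verifying that no cross terms $[Y_j,X_k]$ survive, and that the remaining even products genuinely vanish rather than merely lying in higher filtration, is a routine but careful application of the super Leibniz identity that I would record as the technical core of the argument.
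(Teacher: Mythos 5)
The paper does not actually prove Theorem~\ref{adapted1}: it is recalled verbatim from the reference \cite{perdido}, so there is no in-text proof to compare against. Judged on its own, your argument is sound and is the natural one: the even part is the unique null-filiform Leibniz algebra $NF^n$, and the odd part is a cyclic $R_{X_1}$-module because the odd descending sequence drops by exactly one dimension at each of $m$ steps. One remark on the logical order, though. Your claim that super-nilindex $m$ on the odd side \emph{forces} $R_{X_1}|_{L_{\bar 1}}$ to be a single Jordan block is only valid once you already know that ${\cal C}^k(L_{\bar 1})=R_{X_1}^k(L_{\bar 1})$; a priori ${\cal C}^{k+1}(L_{\bar 1})=\sum_i R_{X_i}({\cal C}^k(L_{\bar 1}))$ involves right multiplication by \emph{all} of $L_{\bar 0}$, and the dimension count alone does not single out $X_1$. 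So the identity $R_{X_k}=0$ on all of $L$ for $k\geq 2$ --- which you state last --- should come first. It is also cleaner than the reduction you sketch: since $X_2=[X_1,X_1]$, the formula $R_{[x,y]}=R_yR_x-(-1)^{\bar i \bar j}R_xR_y$ gives $R_{X_2}=R_{X_1}R_{X_1}-R_{X_1}R_{X_1}=0$ outright, and then $R_{X_{k+1}}=R_{[X_k,X_1]}=R_{X_1}R_{X_k}-R_{X_k}R_{X_1}=0$ by induction. With that established up front, the action on $L_{\bar 1}$ genuinely factors through $R_{X_1}$, the single-Jordan-block conclusion and the choice of cyclic vector $Y_1$ follow as you describe, and your worry about reconciling the generators of the even chain and the odd block evaporates, since $Y_1$ is chosen after $X_1$ is fixed.
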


Thus, it can be seen that if $L=L_{\bar 0} \oplus L_{\bar 1}$ is a Leibniz superalgebra of s-nilindex  $(n,m)$,  then its graded superalgebra associated $gr(L)=  \sum_{i \in \mathbb{N}} L^{i}$,  satisfying $[L^i,L^j]\subset L^{i+j}$, is exactly:

$$\begin{array}{ccccccc}
\underbrace{<X_1,Y_1>} & \oplus & \underbrace{<X_2, Y_2>}& \oplus & \underbrace{<X_3,Y_3>} & \oplus & \dots  \\
L^1 & & L^2 & & L^3 & &  
\end{array}$$

\noindent where for simplicity $L^1=L^1_{\bar 0} \oplus L^1_{\bar 1}=<X_1>\oplus<Y_1>$ has been replaced by $<X_1,Y_1>$ as there can not be any possible confusion between even elements, called $X_i$, and odd ones, called $Y_j$. Analogously, the same replacement has been applied to every $L^i$. The last terms of $gr(L)$ depend on three possibilities:

\

1. If $n<m$, then 
\begin{center}
$\begin{array}{ccccccccc}
\dots & \oplus & \underbrace{<X_{n},Y_n>} & \oplus & \underbrace{<Y_{n+1}>} & \oplus & \dots & \oplus & \underbrace{<Y_{m}>} \\
& &  L^{n}& &  L^{n+1}& &&&L^{m} \\
\end{array}$
\end{center}

2. If $n=m$, then 
\begin{center}
$\begin{array}{ccccc}
\dots & \oplus & \underbrace{<X_{n-1},Y_{n-1}>} & \oplus & \underbrace{<X_{n},Y_{n}>}  \\
& &  L^{n-1}& &  L^{n} \\
\end{array}$
\end{center} 

3. If $n>m$, then 
\begin{center}
$\begin{array}{ccccccccc}
\dots & \oplus & \underbrace{<X_{m-1},Y_m>} & \oplus & \underbrace{<X_{m}>} & \oplus & \dots & \oplus & \underbrace{<X_{n}>} \\
& &  L^{m}& &  L^{m+1}& &&&L^{n} \\
\end{array}$
\end{center}

\begin{rem}
It can be seen that the even elements of $gr(L)$, that is  $\sum_{i \in \mathbb{N}} L^i_{\bar 0}$,  constitutes a naturally graded null-filiform Leibniz algebra. That is,
$$NF^n : [X_i,X_1]=X_{i+1}, \quad 1 \leq i \leq n-1,$$
Furthermore, the definition of naturally graded Leibniz superalgebras is a generalization of naturally graded Leibniz algebras. Thus, if $L=L_{\bar 0} \oplus L_{\bar 1}$ is a naturally graded Leibniz superalgebra, then $L_{\bar 0}$ is a naturally graded Leibniz algebra. Indeed, $L^i_{\bar 0}$ is equivalent to ${\cal C}^{i-1}(L_{\bar 0})/{\cal C}^i(L_{\bar 0})$. Recall that the latter is used in the definition of naturally graded Leibniz algebras. 
\end{rem}

%%%%%%%%%%%%%%%%%%%%%%%%%%%%%%%%%%%%%%%%%%%%%%%%%%%%%%%%%%%%%%
%%%%%%%%%%%%%%%%%%%%%%%%%%%%%%%%%%%%%%%%%%%%%%%%%%%%%%%%%%%%%%
\subsection{Low dimensions}
%%%%%%%%%%%%%%%%%%%%%%%%%%%%%%%%%%%%%%%%%%%%%%%%%%%%%%%%%%%%%%
%%%%%%%%%%%%%%%%%%%%%%%%%%%%%%%%%%%%%%%%%%%%%%%%%%%%%%%%%%%%%%

Next we are going to obtain all the naturally graded  Leibniz superalgebras with super-nilindex  $(n,m)$  for low dimensions of either the even part or the odd part. We can accomplish such classification due to the results of \cite{perdido}. 

\begin{thm}
Let $L=L_{\bar 0} \oplus L_{\bar 1}$ be a non-degenerated naturally graded Leibniz superalgebras with s-nilindex  $(n,m)$,  then the law of $L$  is isomorphic to  a law  of the following list  where the omitted products are equal to zero.

\

\noindent {\bf List of  laws}. 

\

\noindent {\bf Pair of dimensions:} $n=2$, $m=2$. There is a  one-parametric family of naturally graded  Leibniz superalgebras of s-nilindex  $(2,2)$  whose law can be expressed in an adapted basis $\{X_1, X_2, Y_1, Y_2 \}$ as
$$\mu_{1}^{\a} =\left\{\begin{array}{lll}
[X_1,X_1]=X_2, &[X_1,Y_1]=\a Y_2,& \a \in \CC \\[1mm]
[Y_1,X_1]=Y_2, & [Y_1,Y_1]=X_2 &
\end{array}\right.$$

\noindent {\bf Pair of dimensions:} $n$ arbitray with $n \geq 3$, $m=2$. The only sub-case in which there are naturally graded superalgebras is exactly $n=3$. In this case there exist two non-isomorphic naturally graded  Leibniz superalgebras of s-nilindex  $(3,2)$  whose laws can be expressed in an adapted basis $\{X_1, X_2, X_3, Y_1, Y_2\}$ by:
$$\begin{array}{ll}
\mu_2 =\left\{\begin{array}{ll}
[X_i,X_1]=X_{i+1},& 1\leq i \leq 2\\[1mm]
[Y_1,X_1]=Y_2 & \\ [1mm]
[Y_1,Y_1]=X_2 & \\ [1mm]
[Y_2,Y_1]=X_3 &
\end{array}\right.&                    
\mu_3 =\left\{\begin{array}{ll}
[X_i,X_1]=X_{i+1}, 1\leq i \leq 2, & [Y_1,X_1]=Y_2 \\[1mm]
[X_1,Y_1]=-Y_2, & [Y_1,Y_2]=X_3 \\[1mm]
[Y_1,Y_1]=X_2,
\end{array}\right.  \end{array} $$ 

\

\noindent {\bf Pair of dimensions:} $n=2$, $m=3$. There are two non-isomorphic naturally graded  Leibniz superalgebras of s-nilindex  $(2,3)$  whose laws can be expressed in an adapted basis $\{X_1, X_2, Y_1, Y_2, Y_3\}$ by:
 $$\begin{array}{ll}\mu_{1} =\left\{\begin{array}{ll}
[X_1,X_1]=X_{2},& \\[1mm]
[Y_1,X_1]=Y_2 & \\[1mm]
[Y_2,X_1]=Y_3 & \\[1mm]
[Y_1,Y_1]=X_2 &
\end{array}\right.&
\mu_{3}=\left\{\begin{array}{ll}
[X_1,X_1]=X_2,& [Y_1,X_1]=Y_2 \\[1mm]
[X_1,Y_1]=-Y_2,& [Y_2,X_1]=Y_3 \\[1mm]
[X_1,Y_2]=-Y_3,& [Y_1,Y_1]=X_2
\end{array}\right.
\end{array}$$ 
\

\noindent {\bf Pair of dimensions:} $n=3$, $m=3$. There are two non-isomorphic naturally graded  Leibniz superalgebras of s-nilindex  $(3,3)$  whose laws can be expressed in an adapted basis $\{X_1, X_2, X_3, Y_1, Y_2, Y_3\}$ by:
$$\begin{array}{ll}
\mu_1 =\left\{\begin{array}{ll}
[X_i,X_1]=X_{i+1},& 1\leq i \leq 2\\[1mm]
[Y_j,X_1]=Y_{j+1}, & 1\leq j \leq 2\\[1mm]
[Y_1,Y_2]=X_3 & \\[1mm]
[Y_2,Y_1]=-X_3 & \\[1mm]
\end{array}\right.&
\mu_{8}=\left\{\begin{array}{ll}
[X_i,X_1]=X_{i+1},& 1\leq i \leq 2\\[1mm]
[Y_j,X_1]=Y_{j+1}, & 1\leq j \leq 2\\[1mm]
[X_1,Y_2]=-Y_3& \\[1mm]
[Y_1,Y_1]=X_2 & \\[1mm]
[Y_1,Y_2]=X_3 & \\[1mm]
\end{array}\right.
\end{array}$$

\

\noindent {\bf Pair of dimensions:} $n=4$, $m=3$. There are two non-isomorphic naturally graded Leibniz superalgebras of s-nilindex  $(4,3)$  whose laws can be expressed in an adapted basis $\{X_1, X_2, X_3, X_4, Y_1, Y_2, Y_3 \}$ by:
 $$\begin{array}{ll}\mu_9=\left\{\begin{array}{ll}
[X_i,X_1]=X_{i+1}, 1\leq i \leq 3, & [Y_1,Y_3]=X_4 \\[1mm]
[Y_j,X_1]=Y_{j+1}, 1\leq j \leq 2, & [Y_2,Y_2]=-X_4 \\[1mm]
[X_1,Y_1]=-Y_2, & [Y_3,Y_1]=X_4 \\[1mm]
[X_1,Y_2]=-Y_3 &
\end{array}\right.&    \mu_{12}=\left\{\begin{array}{ll}
[X_i,X_1]=X_{i+1},& 1\leq i \leq 3\\[1mm]
[Y_j,X_1]=Y_{j+1},& 1\leq j \leq 2\\[1mm]
[Y_1,Y_1]=X_2 & \\[1mm]
[Y_2,Y_1]=X_3 & \\[1mm]
[Y_3,Y_1]=X_4&
\end{array}\right.
\end{array}$$   

\

\noindent {\bf Pair of dimensions:} $n$ arbitrary with $n\geq 5$, $m=3$. There is  not any naturally graded  Leibniz superalgebra of s-nilindex  $(n,3)$. 

\

\noindent {\bf Pair of dimensions:} $n=2$, $m$ arbitrary with $m\geq 4$. There are two non-isomorphic naturally graded  Leibniz superalgebras of s-nilindex  $(2, m)$  whose laws can be expressed in an adapted basis $\{X_1,X_2,Y_1,Y_2,Y_3,Y_4,\dots,Y_m\}$ by
$$\begin{array}{ll} \mu_{m-1} =\left\{\begin{array}{ll}
[X_1,X_1]=X_{2}& \\[1mm]
[Y_i,X_1]=Y_{i+1}, & 1\leq i \leq m-1\\[1mm]
[Y_1,Y_1]=X_2 & \\[1mm]
\end{array}\right.& 
\mu_{m+1}=\left\{\begin{array}{ll}
[X_1,X_1]=X_{2}& \\[1mm]
[X_1,Y_j]=-Y_{j+1}, & 2\leq j \leq m-1\\[1mm]
[Y_i,X_1]=Y_{i+1}, & 1\leq i \leq m-1\\[1mm]
[Y_1,Y_1]=X_2 & \\[1mm]
\end{array}\right.
\end{array}$$
\end{thm}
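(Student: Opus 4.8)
The plan is to start from the adapted form of Theorem~\ref{adapted1} and impose the two conditions defining \emph{naturally graded}, thereby reducing the whole problem to solving a finite polynomial system in a handful of structure constants for each admissible pair $(n,m)$. By the remark following Theorem~\ref{adapted1}, any naturally graded $L$ has even part $L_{\bar 0}=NF^n$, so $[X_i,X_1]=X_{i+1}$ and the remaining products among the $X_i$ vanish. Reading off the grading $L^i=\langle X_i,Y_i\rangle$, every product must respect total degree: the adapted basis already fixes $[Y_j,X_1]=Y_{j+1}$ and $[Y_j,X_k]=0$ for $k\geq 2$, so the only products left free are the even ones $[Y_i,Y_j]=c_{ij}X_{i+j}$ (necessarily zero unless $i+j\leq n$) and the odd ones $[X_1,Y_j]=a_jY_{j+1}$.

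First I would observe that the products $[X_k,Y_j]$ with $k\geq 2$ are \emph{not} independent parameters. Writing $X_k=[X_{k-1},X_1]$ and applying the super Leibniz identity yields the recursion $[X_k,Y_j]=[X_{k-1},[X_1,Y_j]]+[[X_{k-1},Y_j],X_1]$, so they are completely determined by the scalars $a_j$. Hence a naturally graded candidate depends only on the finitely many numbers $a_j$ and $c_{ij}$, and \emph{non-degeneracy} amounts precisely to asking that the $c_{ij}$ do not all vanish (otherwise $L$ collapses to its even Leibniz algebra).

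Next I would extract the governing relations by evaluating the super Leibniz identity on the relevant triples of basis vectors. The triple $(Y_i,Y_j,X_1)$ gives $c_{i,j+1}=c_{ij}-c_{i+1,j}$, and the triple $(X_1,Y_i,Y_j)$ gives the coupling $a_ic_{i+1,j}+a_jc_{j+1,i}=0$ (here one uses that $[X_1,X_{i+j}]=0$ in $NF^n$); the triples $(Y_k,Y_i,Y_j)$ and $(X_k,Y_i,Y_j)$ produce the further relations tying the $c_{ij}$ to the $a_j$. I would then solve this system separately for each pair $(n,m)$ in the statement. The grading-compatible isomorphisms are determined by the images $X_1\mapsto\a X_1$, $Y_1\mapsto\b Y_1$, which propagate as $X_k\mapsto\a^{k}X_k$ and $Y_j\mapsto\a^{\,j-1}\b\,Y_j$; under these all $c_{ij}$ are multiplied by the common factor $\b^{2}\a^{-2}$, so a leading coefficient may be normalized to $1$, whereas the $a_j$ are scaling-invariant and therefore survive either as a genuine modulus (as in the family $\mu_1^{\a}$) or as discrete invariants that separate the listed laws. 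For the families carrying a free index one argues uniformly: the recursion, the boundary conditions $c_{ij}=0$ for $i+j>n$, and the coupling relations over-determine the $c_{ij}$, forcing them all to vanish outside a narrow range of $(n,m)$. This is how one rules out naturally graded superalgebras for $n\geq 5,\,m=3$, shows that for $m=2$ only $n=3$ survives, and isolates the two one-line families for $n=2$ with $m\geq 4$.

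The main obstacle is not any single identity but the sheer size and bookkeeping of the argument: there are many pairs $(n,m)$, each giving a moderately large nonlinear system in the $a_j$ and $c_{ij}$, and after solving one must still verify that the survivors are pairwise non-isomorphic. This is where symbolic computation becomes indispensable, as the authors note; the conceptually delicate points are handling the arbitrary-index families so that the recursions are shown to close uniformly, and keeping every normalization strictly compatible with the $\ZZ_2$-grading so that no spurious identifications or omissions creep into the final list.
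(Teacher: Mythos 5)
Your proposal follows essentially the same route as the paper: reduce to the adapted basis of Theorem 5.1 with $L_{\bar 0}=NF^n$, let the natural gradation force $[Y_i,Y_j]=c_{ij}X_{i+j}$ and $[X_1,Y_j]=a_jY_{j+1}$, impose the super Leibniz identity on triples (your relations $c_{i,j+1}=c_{ij}-c_{i+1,j}$ and $a_ic_{i+1,j}+a_jc_{j+1,i}=0$ are exactly the constraints the authors use, and the annihilator argument they invoke is subsumed by your identities), and normalize by the diagonal graded changes of basis under which the $c_{ij}$ scale by $\beta^2\alpha^{-2}$ while the $a_j$ are invariant. Your extra observation that the $[X_k,Y_j]$ with $k\geq 2$ are determined recursively from the $a_j$ is a mild streamlining of the paper's treatment (which carries them as unknowns $\beta_{ij}$), but the method and the case-by-case computational burden are the same.
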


%%%%%%%%%%%%%%%%%%%%%%%%%%%%%%%%%%%%%%%%%%%%%%%%%%%%%%%%%%%%%%%
%%%%%%%%%%%%%%%%%%%%%%%%%%%%%%%%%%%%%%%%%%%%%%%%%%%%%%%%%%%%%%%
\subsection{Cases $n \geq 3$ and $m \geq 4$}
%%%%%%%%%%%%%%%%%%%%%%%%%%%%%%%%%%%%%%%%%%%%%%%%%%%%%%%%%%%%%%%
%%%%%%%%%%%%%%%%%%%%%%%%%%%%%%%%%%%%%%%%%%%%%%%%%%%%%%%%%%%%%%%

Throughout this  sub-section  we deal with the problem of determining the structure of the aforementioned naturally graded Leibniz superalgebras for the remaining dimensions. Notice that the  naturally graded Leibniz superalgebra with maximal s-nilindex named $NG^{n, m}$ appears in every pair of arbitrary dimensions $n$ and $m$. 
$$NG^{n,m}:\left\{\begin{array}{ll}
[X_i,X_1]=X_{i+1},& 1\leq i\leq n-1 \\[1mm]
[Y_j,X_1]=Y_{j+1},& 1\leq j\leq m-1 \\[1mm]
[Y_i,Y_1]=X_{i+1},& 1\leq i\leq \min\{n-1,m\} 
\end{array}\right.$$
with the vectors $\{X_1,X_2,\dots,X_n,Y_1,Y_2,\dots,Y_m\}$ as adapted basis.  Also,  we have the following result.

\begin{thm}
Let $L=L_{\bar 0} \oplus L_{\bar 1}$ be a non-degenerated naturally graded Leibniz superalgebras with s-nilindex  $(n,m)$  and adapted basis $\{X_1,X_2,\dots,X_n,Y_1,Y_2,\dots,Y_m\}$.  Then the  law of $L$ will be isomorphic either to the law named $NG^{n,m}$ or to a law belonging to the following families of laws. The precise expression of these last families of laws depends on two possibilities, that is:
\begin{itemize}
\item[1.] If $n\leq m$, 
$$\left\{\begin{array}{ll}
 [X_i,X_1]=X_{i+1},& 1\leq i\leq n-1,\\[1mm]
 [Y_j,X_1]=Y_{j+1},& 1\leq j\leq m-1,\\[1mm]
 [X_1,Y_j]=-Y_{j+1},& 1\leq j\leq m-1,\\[1mm]
 [Y_1,Y_1]=\gamma_1 X_2,&\\[1mm]
 [Y_i,Y_1]=\gamma_i X_{i+1},& 3\leq i\leq n-1,\\[1mm]
 [Y_i,Y_j]=\displaystyle\sum_{s=0}^{j-1}(-1)^s \left ( \begin{array}{c}
 j-1\\
 s
 \end{array}\right ) \gamma_{i+s}X_{i+j},& 1\leq i\leq n-2,\ 2\leq j\leq n-i -1. \end{array}\right.$$
with $\gamma_2=0$ and $\gamma_{2j}=(-1)^{j}\displaystyle\sum_{s=0}^{j-2} (-1)^s \left ( \begin{array}{c}
j-1\\
s
\end{array}\right ) \gamma_{j+s+1},\ \mbox{ for } 2\leq j\leq \lfloor\frac{n-1}{2}\rfloor.$

\item[2.] If $n>m$,

$$\left\{\begin{array}{ll}
[X_i,X_1]=X_{i+1},& 1\leq i\leq n-1,\\[1mm]
[Y_j,X_1]=Y_{j+1},& 1\leq j\leq m-1,\\[1mm]
[X_1,Y_j]=-Y_{j+1},& 1\leq j\leq m-1,\\[1mm]
[Y_1,Y_1]=\gamma_1 X_2,&\\[1mm]
[Y_i,Y_1]=\gamma_i X_{i+1},& 3\leq i\leq m,\\[1mm]
[Y_i,Y_j]=\displaystyle\sum_{s=0}^{min\{j-1,m-i\}}(-1)^s \left ( \begin{array}{c}
j-1\\
s
\end{array}\right ) \gamma_{i+s}X_{i+j},& 2\leq j\leq m,\ 1\leq i\leq min\{m,n-j\}.
\end{array}\right.$$
with $\gamma_2=0$ and 
$$\gamma_{2j}=(-1)^{j}\sum_{s=0}^{j-2} (-1)^s \left ( \begin{array}{c}
j-1\\
s
\end{array}\right ) \gamma_{j+s+1},\ 2\leq j\leq \lfloor\frac{m}{2}\rfloor\qquad \qquad \gamma_i=\sum_{s=1}^{m-i}(-1)^{s+1} \left ( \begin{array}{c}
m\\
s
\end{array}\right )\gamma_{i+s}.$$
\end{itemize}
$\lfloor \ \rfloor$ denotes the whole part of the number.
\end{thm}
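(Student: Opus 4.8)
The plan is to start from the adapted basis of Theorem \ref{adapted1} and to turn the natural gradation into a rigid normal form with only finitely many scalar unknowns, then to grind the super Leibniz identity against them. Since $[X_i,X_1]=X_{i+1}$, $[Y_j,X_1]=Y_{j+1}$ and $[Y_j,X_k]=0$ for $k\ge 2$ are already fixed, and since $[L^i,L^j]\subset L^{i+j}$ forces every remaining bracket into the one-dimensional graded piece of the correct parity, the only free data are the scalars $\delta_j$ in $[X_1,Y_j]=\delta_j Y_{j+1}$ and $\gamma_i$ in $[Y_i,Y_1]=\gamma_i X_{i+1}$, together with the even--odd products $[X_i,Y_j]=e_{ij}Y_{i+j}$ and the odd--odd products $[Y_i,Y_j]=c_{ij}X_{i+j}$ with $j\ge 2$, which I expect to be \emph{determined} by the first two families once super Leibniz is imposed. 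Non-degeneracy is exactly the statement that some $\gamma_i\neq 0$ (otherwise $L$ collapses to a Leibniz algebra acting trivially), and this is what separates $NG^{n,m}$ and the families from the degenerate situation.

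First I would extract the odd--odd products. Feeding $[Y_i,[Y_j,X_1]]$ into the super Leibniz identity and using $[Y_j,X_1]=Y_{j+1}$ yields the finite-difference recursion $c_{i,j+1}=c_{i,j}-c_{i+1,j}$, with initial data $c_{i,1}=\gamma_i$. Solving it produces precisely the binomial closed form for $[Y_i,Y_j]$ displayed in both items of the statement; so this one computation already delivers the bulk of the two families, and everything afterwards concerns pinning down $\delta_j$ and the admissible $\gamma$'s. Along the way I would also record $\gamma_i\,e_{i+1,1}=0$, obtained from $[Y_i,[Y_1,Y_1]]=0$ (which forces $[[Y_i,Y_1],Y_1]=0$), as it is what kills the even--odd products later.

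Next I would resolve the action of $X_1$ on the odd part. Expanding $[X_1,[Y_j,X_1]]$ gives $\delta_{j+1}(1+\delta_j)=0$, while applying $I\subset Ann(L)$ to $[X_1,Y_j]+[Y_j,X_1]=(1+\delta_j)Y_{j+1}$ (equivalently, comparing the two reduction recursions for $c_{ij}$) gives $(1+\delta_j)c_{i,j+1}=0$. Together with non-degeneracy these force a clean dichotomy: if $\delta_1\neq -1$, then $\gamma_i\,e_{i+1,1}=0$ with some $\gamma_i\neq0$ forces $\delta_1=0$, hence all $\delta_j=0$, hence all $c_{i,j}=0$ for $j\ge 2$ and all $\gamma_i$ equal, and the single rescaling $Y_j\mapsto \gamma^{-1/2}Y_j$ normalises this to $NG^{n,m}$; otherwise $\delta_1=-1$ and one must propagate this to get $\delta_j=-1$ for every $j$, the families case. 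This propagation is the step I expect to be the most delicate, because $\delta_{j+1}(1+\delta_j)=0$ is vacuous once $\delta_j=-1$; it has to be driven by $(1+\delta_j)c_{i,j+1}=0$ together with the fact that non-degeneracy keeps the relevant $c_{i,j+1}$ from vanishing prematurely, and this is where the bookkeeping is heaviest.

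Finally, in the families case ($\delta_j\equiv -1$) I would produce the relations among the $\gamma_i$. Expanding $[X_1,[Y_a,Y_b]]=0$ (the left side vanishes since $[X_1,X_{a+b}]=0$) gives the symmetry $c_{a+1,b}+c_{b+1,a}=0$; at $a=b=1$ this reads $2\gamma_2=0$, so $\gamma_2=0$, and at $a=b=j$ it reads $c_{j+1,j}=0$, which on inserting the binomial formula becomes exactly the displayed recursion for $\gamma_{2j}$. Thus the even-indexed $\gamma$'s are determined and the odd-indexed ones survive as the genuine parameters of the families. For $n>m$ the only change is that $Y_{m+1}=0$ truncates every sum at $s=\min\{j-1,m-i\}$ and, fed through the $j=m$ instance of the recursion (equivalently through $[Y_i,[Y_m,X_1]]=0$), forces the extra relation displayed in item~$2$, which is exactly the vanishing of the truncated $c_{i,m+1}$. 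Keeping the index ranges and truncations straight in this $n>m$ regime, and checking that the resulting families are closed under the admissible scalings, is the remaining source of the heavy computation alluded to in the introduction.
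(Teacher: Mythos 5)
Your overall strategy coincides with the paper's: the same normal form extracted from the gradation, the recursion $c_{i,j+1}=c_{i,j}-c_{i+1,j}$ from $[Y_i,[Y_j,X_1]]$ yielding the binomial closed form, the dichotomy driven by $[X_1,Y_1]+[Y_1,X_1]=(1+\delta_1)Y_2\in Ann(L)$, the relations $\gamma_2=0$ and the $\gamma_{2j}$ recursion from $[X_1,[Y_j,Y_j]]=0$, and, for $n>m$, the extra relation read off as the vanishing of the truncated $c_{i,m+1}$. All of that matches the paper and is correct. However, two pieces of your plan are either missing or rest on the wrong identity, and they are exactly where the real work lies.

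First, the mixed case you flag as ``most delicate'' --- $\delta_j=-1$ for $2\le j\le k-2$ but $\delta_{k-1}\neq-1$ for some intermediate $k$ --- is not dispatched by your proposed mechanism (non-degeneracy keeping the relevant $c_{i,j+1}$ nonzero); in fact the paper's resolution runs the other way: one shows the superalgebra must be \emph{degenerate}. Concretely, $Y_k,\dots,Y_m\in Ann(L)$, the coefficients of $[X_i,Y_j]$ are computed explicitly in terms of $\alpha=\delta_{k-1}$, and then a case split on $m$ versus $n$ finishes it: for $m\ge n+3$ the triples $\{X_n,X_1,Y_1\}$ and $\{X_n,X_1,Y_2\}$ give two incompatible values of $\alpha$, while for $m=n+2$ and $m=n+1$ one assembles square linear systems in the $\gamma_i$ whose only solution is $\gamma_i=0$ for all $i$, i.e.\ degeneracy. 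Without this analysis the dichotomy ``either $NG^{n,m}$ or the families'' is not established. Second, in the families case ($\delta_j\equiv-1$) you attribute the vanishing of $[X_i,Y_j]$ for $i\ge2$ to the relation $\gamma_i e_{i+1,1}=0$; this fails whenever $\gamma_i=0$ (and $\gamma_2=0$ always, with possibly many more $\gamma$'s vanishing), so it cannot kill these products in general. The paper instead eliminates them via the identities on $\{X_1,X_1,Y_j\}$ and $\{X_1,X_i,Y_j\}$, which force $\beta_{2,j}=0$ and then $\beta_{i,j}=0$ for higher $i$, with the boundary coefficients $\beta_{i,m-i}$ handled separately by the triples $\{X_i,X_1,Y_{m-i-1}\}$. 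You need both of these arguments for the proof to close.
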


\begin{proof}
An adequate use of {\em Mathematica} software has been truly helpful to accomplish the proof of this theorem. It has been a very useful tool to both complete all the calculations involved in many particular pair of dimensions  and deduce the process to follow in arbitrary dimensions. Next, we will distinguish two cases. We only prove case 1 in details and omit case 2 completely in order not to repeat the same ideas.

\noindent {\bf Case 1: } $n\leq m$.
We consider the  adapted basis
$\{X_1,X_2,\dots,X_n,Y_1,Y_2,\dots,Y_m\}$ and the  gradation
$$<X_1,Y_1>\oplus <X_2,Y_2>\oplus\dots <X_n,Y_n>\oplus <Y_{n+1}>\oplus <Y_{n+2}>\oplus\dots\oplus <Y_m>$$

Using  their properties
$$\begin{array}{llll}
[X_i,X_1]=X_{i+1},& 1\leq i\leq n-1,&
[Y_j,X_1]=Y_{j+1},& 1\leq j\leq m-1,\\[1mm]
[X_1,Y_j]=\alpha_j Y_{j+1},& 1\leq j\leq m-1,&
[X_i,Y_j]=\beta_{i\,j}Y_{i+j},& 2\leq i\leq n,\quad 1\leq j \leq m-i\\[1mm]
[Y_i,Y_j]=\gamma_{i\,j}X_{i+j},& 1\leq i\leq m,\quad 1\leq j \leq n-i.&
\end{array}$$

We have that $[X_1,Y_1]+[Y_1,X_1]\in Ann(L),$ then $(\alpha_1+1)Y_2 \in Ann(L).$

{\bf Case 1.1: } $1+\alpha_1\neq 0$, then $Y_2 \in Ann(L).$ Taking into account that $[Y_2,X_1]=Y_3$, it is easy to prove that $Y_3\in Ann(L).$ Analogously, $Y_4,Y_5,\dots,Y_m\in Ann(L).$ Thus, we have
$$\begin{array}{llll}
[X_i,X_1]=X_{i+1},& 1\leq i\leq n-1,&
[Y_j,X_1]=Y_{j+1},& 1\leq j\leq m-1,\\[1mm]
[X_1,Y_1]=\beta_1 Y_2,& &
[X_i,Y_1]=\beta_i Y_{i+1},& 2\leq i\leq n,\\[1mm]
[Y_i,Y_1]=\gamma_i X_{i+1},& 1\leq i\leq n-1.&&
\end{array}$$

We make the super Leibniz identity on the triples $\{Y_i,Y_1,X_1\}$ and we obtain $\gamma_i=\gamma_1=\gamma$ with $1\leq i\leq n-1.$ If $\gamma=0$ then, the superalgebra is degenerated, thus $\gamma\neq 0$ and making $Y'_1=\frac{1}{\sqrt{\gamma}}Y_1$ allows us to consider $\gamma=1.$

Now, from the super Leibniz identity on the triples $\{X_i,Y_1,Y_1\}$ for $1\leq i\leq n-2$ we get $\beta_i=0.$ From $\{X_{n-2},Y_1,X_1\}$ and $\{X_{n-1},Y_1,X_1\}$ we obtain $\beta_{n-1}=0$ and  $\beta_{n}=0,$ respectively.

Thus, we obtain the naturally graded  Leibniz superalgebra  $NG^{n,m}$.

\

{\bf Case 1.2: } $1+\alpha_1=0.$ We can distinguish two cases:

\

{\bf (a)} $1+\alpha_i=0$ for $2\leq i\leq m-1.$ We have:
$$\begin{array}{llll}
[X_i,X_1]=X_{i+1},& 1\leq i\leq n-1,&
[Y_j,X_1]=Y_{j+1},& 1\leq j\leq m-1,\\[1mm]
[X_1,Y_j]=-Y_{j+1},& 1\leq j\leq m-1,&
[X_i,Y_j]=\beta_{i\,j}Y_{i+j},& 2\leq i\leq n,\quad 1\leq j \leq m-i\\[1mm]
[Y_i,Y_j]=\gamma_{i\,j}X_{i+j},& 1\leq i\leq m,\quad 1\leq j \leq n-i.&&
\end{array}$$

The super Leibniz identity on the following triples imposes further constraints on the above family.
\begin{center}
\begin{tabular}{lll}
	Super Leibniz identity& &Constraint\\
		\hline\hline\\
		$\{X_1,X_1,Y_j,\},\ 1\leq j\leq m-2$ &\quad $\Rightarrow $\quad & $\beta_{2\, j}=0$\\
		$\{X_1,X_i,Y_j\},\ 3\leq i\leq n,\ 1\leq j\leq m-i-1$ &\quad $\Rightarrow $\quad& $\beta_{i\, j}=0$  \mbox{and}  \\
		&  & $[X_i,Y_{m-i}]=\beta_{i\,m-i}Y_{m}=\beta_i Y_m,\ \ 3\leq i\leq n$\\

$\{X_i,X_1,Y_{m-i-1},\},\ 2\leq i\leq n-1$ &\quad $\Rightarrow $\quad & $\beta_3=0$ and $\beta_{i+1}=(-1)^{i+1}\beta_3=0$\\
		
$\{Y_i,X_1,Y_{j-1}\},\ 1\leq i\leq n-2,\ 2\leq j\leq n-i$ &\quad $\Rightarrow $\quad&$\gamma_{i\, j}=\gamma_{i\, j-1}-\gamma_{i+1\,  j-1}$ \\
\end{tabular}
\end{center}

Thus, we prove by the induction method that:
$$\gamma_{i\, 1}=\gamma_i,\ 1\leq i\leq n-1,\qquad 
\gamma_{i\, j}=\displaystyle\sum_{ k=0}^{j-1}(-1)^{ k } \left ( \begin{array}{c}
j-1\\
k
\end{array}\right ) \gamma_{i+k},\ \ 1\leq i\leq n-2,\ 2\leq j\leq n-i$$

Applying the above relations, we can write 
$$\begin{array}{llll}
[X_i,X_1]=X_{i+1},& 1\leq i\leq n-1,&
[Y_j,X_1]=Y_{j+1},& 1\leq j\leq m-1 \\[1mm]
[X_1,Y_j]=-Y_{j+1},& 1\leq j\leq m-1,&
[Y_i,Y_1]=\gamma_i X_{i+1},& 1\leq i\leq n-1 \\[1mm]
[Y_i,Y_j]=\displaystyle\sum_{k=0}^{j-1}(-1)^k \left ( \begin{array}{c}
j-1\\
k
\end{array}\right ) \gamma_{i+k}X_{i+j},& 1\leq i\leq n-2, & 2\leq j\leq n-i \end{array}$$

For instance, the super Leibniz identity on the triples $[X_1,[Y_1,Y_1]]$ and $[X_1,[Y_j,Y_j]]$  gives respectively  $\gamma_2=0$ and
$$\gamma_{2j}=(-1)^{j}\displaystyle\sum_{s=0}^{j-2} (-1)^s \left ( \begin{array}{c}
j-1\\
s
\end{array}\right ) \gamma_{j+s+1},\ 2\leq j\leq \lfloor\frac{n-1}{2}\rfloor.$$

We obtain the family of naturally graded Leibniz superalgebras with maximal s-nilindex of the theorem statement.
 
\

{\bf (b)} There exists $k$ such that $1+\alpha_i=0$ for $2\leq i\leq k-2$, and $1+\alpha_{k-1}\neq 0.$ We have
$$[X_1,Y_{k-1}]+[Y_{k-1},X_1]=(1+\alpha_{k-1})Y_k\in Ann(L).$$ Then $Y_k\in Ann(L) \mbox{ and } Y_{k+1},Y_{k+2},\dots,Y_m\in Ann(L).$ Hence, we can consider
$$\begin{array}{llll}
[X_i,X_1]=X_{i+1},& 1\leq i\leq n-1,&
[Y_j,X_1]=Y_{j+1},& 1\leq j\leq m-1,\\[1mm]
[X_1,Y_j]=- Y_{j+1},& 1\leq j\leq k-2,&
[X_1,Y_{k-1}]=\alpha Y_k,& 1+\alpha\neq 0\\[1mm]
[X_i,Y_j]=\beta_{i\, j}Y_{i+j},& 1\leq j \leq k-1,\quad 2\leq i\leq min\{n,m-j\}& &\\[1mm]
[Y_i,Y_j]=\gamma_{i\, j}X_{i+j},& 1\leq j\leq k-1,\quad 1\leq i \leq n-k+1.& &   \end{array}$$

%\noindent $\bullet$ We suppose that $k\leq n.$

Consider the super Leibniz identity on the triple $\{Y_i,X_1,Y_j\}$ with $2\leq j\leq k-1$ and $1\leq i\leq n-j$. We get $$\gamma_{i\, j}=\gamma_{i\, j-1}-\gamma_{i+1\, j-1}.$$

For $j=k,$ we have that $\gamma_{i\, k-1}=\gamma_{1\, k-1}.$
If we rename  $\gamma_{i\, 1}:=\gamma_i$  with $1\leq i\leq n-1$,  it is easy to prove by induction method on $j,$ that
\begin{equation}\label{estrella}
\gamma_{i\, j}=\displaystyle\sum_{s=0}^{j-1}(-1)^{s}\left ( \begin{array}{c}
j-1\\
s
\end{array}\right )\gamma_{i+s},\quad\quad 2\leq j\leq k-1,\ \ 1\leq i\leq n-j
\end{equation}

Thus, we have that
$$\begin{array}{ll}
[Y_i,Y_1]=\gamma_{i}X_{i+1},& 1\leq i\leq n-1 \\[1mm]
[Y_i,Y_j]=\displaystyle\sum_{s=0}^{j-1}(-1)^{s}\left ( \begin{array}{c}
j-1\\
s
\end{array}\right )\gamma_{i+s}X_{i+j},& 2\leq j\leq k-1,\ \ 1\leq i\leq n-j
\end{array}$$

From the super Leibniz identity on the triple $\{X_1,Y_1,Y_1\}$ we get $\gamma_2=0.$

%From $[X_1,[Y_j,Y_j]]$ we get $$\gamma_{2j}=(-1)^{j}\displaystyle\sum_{s=0}^{j-2}(-1)^{s}\left ( \begin{array}{c}
%j-1\\ s \end{array}\right )\gamma_{j+s+1}$$ with $2\leq j\leq \lfloor\frac{n-1}{2}\rfloor.$

Using the induction method together with super Leibniz identity on the triple $\{X_i,X_1,Y_j\}$, we prove that
$$\beta_{i\, j}=\left\{\begin{array}{ll}
0, & 3\leq i+j\leq k-1 \\
(-1)^{j-k+1} \left ( \left (\begin{array}{c}
i-2\\
j-k+i
\end{array}\right )+\left (\begin{array}{c}
i-1\\
j-k+i
\end{array}\right )\alpha \right ), & k\leq i+j\leq m
\end{array}\right.
$$
with $2\leq i\leq \min\{n,m-j\}.$

\

\noindent $(1)$ $\bf m\geq n+3.$
If we make the super Leibniz identity on the triples 
$\{X_n,X_1,Y_1\}$ and $\{X_n,X_1,Y_2\}$ we get $\alpha=-\displaystyle\frac{k-2}{n}$ and $\alpha=-\displaystyle\frac{k-3}{n},$ respectively. But that is a contradiction, then there is not any superalgebra.

\

\noindent $(2)$ $\bf m=n+2.$

\begin{center}
	\begin{tabular}{lll}
		Super Leibniz identity & & Constraint\\
		\hline\hline\\
		$\{X_n,X_1,Y_{1},\}$ & $\Rightarrow $ &$\alpha=-\displaystyle\frac{k-2}{n}.$\\[1mm]
		$\{Y_i,Y_1,Y_{n+1-i}\}$ & $\Rightarrow $\quad&$
		\left\{\begin{array}{l}
		\gamma_i \beta_{i+1\, n+1-i}=0,\\
		\beta_{i+1\, n+1-i}\neq 0,\\ i\geq n+2-k
		\end{array}\right.$ $\Rightarrow$
	  $\gamma_i=0,\ n+2-k\leq i\leq n-1.$
		 \\[5mm]
		$\{Y_i,Y_1,Y_1\}$& $\Rightarrow $\quad&$
		\left\{\begin{array}{l}
		\gamma_i\beta_{i+1\,1}=0,\\
		\beta_{i+1\,1}\neq 0,\\ k-2\leq i\leq  n
		\end{array}\right.$ $\Rightarrow$
		$\gamma_i=0,\ k-2\leq i\leq n-1.$
		\\[5mm]
		$\{X_1,Y_{k-1},Y_j\}$& $\Rightarrow $\quad&
		$\alpha \gamma_{k\, j}-\gamma_{j+1\, k-1}=0,$\ $1\leq j\leq n-k.$
	\end{tabular}
\end{center}

By other hand, recall that $\gamma_j=0$, for $k-2\leq j\leq n-1$, and by Equation \eqref{estrella} we have that $\gamma_{k\, j}=0.$ Accordingly $\gamma_{j+1\,k-1}=0$ for $1\leq j\leq n-k,$ that is,
$$\sum_{s=0}^{k-2} (-1)^s \left  (\begin{array}{c}
k-2\\
s
\end{array}\right )\gamma_{j+1+s}=0,\quad 1\leq j\leq n-k$$

Then we have a system of  $(n-k)$ linear equations and $(n-k)$ variables  admitting  one unique solution $\gamma_i=0$, $2\leq i\leq n-k+1.$

Only rest to prove that $\gamma_1=0.$ For this, it is sufficient to consider the super Leibniz identity on the triple $\{Y_1,Y_1,Y_{k-1}\}.$ We conclude that the superalgebra is degenerated.

\
	
\noindent $(3)$ $\bf m=n+1.$
From the super Leibniz identity on $\{Y_i,Y_1,Y_1\}$ we get $\gamma_i\beta_{i+1\,1}=0$. If $k-2\leq i\leq  n$ then $\beta_{i+1\,1}\neq 0.$ Thus $\gamma_i=0$ for $k-2\leq i\leq n-1.$

Analogously, from $\{Y_{k-3},Y_1,Y_3\}, \{X_1,Y_1,Y_j\}$ we conclude that $\gamma_{k-3}=0$, and $\gamma_{2\, j}+\gamma_{j+1}=0,$ respectively. By other hand, remenber that $\gamma_j=0$ with $k-2\leq j\leq n-1$, which implies that
$$\sum_{s=1}^{j-2} (-1)^s  \left (\begin{array}{c}
j-1\\
s
\end{array}\right)\gamma_{2+s}+((-1)^{j-1}+1)\gamma_{j+1}=0,\quad 4\leq j\leq k-3$$

Then we have a system of  $(k-6)$ linear equations and $(k-6)$ variables admiting one unique solution $\gamma_i=0$ with $3\leq i\leq k-4.$

Only rest to prove that $\gamma_1=0.$ For this, it is sufficient to consider the super Leibniz identity on the triples $\{Y_1,Y_1,Y_{k-1}\}$ if $k$ is odd and   $\{Y_1,Y_1,Y_{k-2}\}$ if $k$ is  even. We conclude then, that the superalgebra is degenerated.
%\end{itemize}

\

\

%{\bf Case 2: } $n=m$.
%\textcolor{red}{Lo he hecho aparte y es igual que el caso $n<m$. Hay que añadir el igual.} {\blue This case is analogous to the previous case.}

{\bf Case 2: } $n>m$.
We consider the adapted basis $\{X_1,X_2,\dots,X_n,Y_1,Y_2,\dots,Y_m\}$ and the  gradation
$$<X_1,Y_1>\oplus <X_2,Y_2>\oplus\dots <X_m,Y_m>\oplus <X_{m+1}>\oplus <X_{m+2}>\oplus\dots\oplus <X_n>$$

 By using  the adapted basis and the properties of natural gradation
$$\begin{array}{llll}
[X_i,X_1]=X_{i+1},& 1\leq i\leq n-1,&
[Y_j,X_1]=Y_{j+1},& 1\leq j\leq m-1\\[1mm]
[X_1,Y_j]=\alpha_j Y_{j+1},& 1\leq j\leq m-1,&
[X_i,Y_j]=\beta_{i\,j}Y_{i+j},& 1\leq j \leq m-2,\quad 2\leq i\leq m-j\\[1mm]
[Y_i,Y_j]=\gamma_{i\,j}X_{i+j},& 1\leq i\leq m,& 1\leq j \leq \min\{ n-i, \ m\}
\end{array}$$

{\bf Case 2.1} If $1+\alpha_1\neq 0,$ then $Y_2,Y_3,\dots,Y_m\in Ann(L).$ We have
$$\begin{array}{llll}
[X_i,X_1]=X_{i+1},& 1\leq i\leq n-1,&
[Y_j,X_1]=Y_{j+1},& 1\leq j\leq m-1\\[1mm]
[X_1,Y_1]=\alpha Y_{2},& &
[X_i,Y_1]=\beta_{i}Y_{i+1},&  2\leq i\leq m-1\\[1mm]
[Y_i,Y_1]=\gamma_{i}X_{i+1},& 1\leq i\leq m
\end{array}$$

Similar to the previous cases, using the super Leibniz identity, we get  the superalgebra $NG^{n,m}.$

%From $[Y_i,[Y_1,X_1]]$ we have $\gamma_{i}=\gamma_1=\gamma$ with $1\leq i\leq {\red n-1} \ {\blue m}.$ If $\gamma=0$ then, the superalgebra is split, thus $\gamma\neq 0$ and making $Y'_1=\frac{1}{\sqrt{\gamma}}Y_1$ we allows $\gamma=1.$
%From $[X_1,[Y_1,Y_1]]$ we get to $\alpha=0.$
%From $[X_i,[Y_1,Y_1]]$ we have $\beta_i=0$ with $2\leq i\leq n-2.$
%Thus, we have the following superalgebra:
%$$\left\{\begin{array}{ll}
%[X_i,X_1]=X_{i+1},& 1\leq i\leq n-1,\\[1mm]
%[Y_j,X_1]=Y_{j+1},& 1\leq j\leq m-1,\\[1mm]
%[Y_i,Y_1]=X_{i+1},& 1\leq i\leq {\red n-1} \ {\blue m}.
%\end{array}\right.$$
%\noindent {\blue which corresponds with $NG^{n,m}$.}

\

{\bf Case 2.2: } $1+\alpha_1=0.$ We  distinguish  the following two cases:

{\bf (a)} $1+\alpha_i=0$ for all $i,\ 2\leq i\leq m-1.$ We have:
$$\begin{array}{llll}
[X_i,X_1]=X_{i+1},& 1\leq i\leq n-1,&
[Y_j,X_1]=Y_{j+1},& 1\leq j\leq m-1\\[1mm]
[X_1,Y_j]=-Y_{j+1},& 1\leq j\leq m-1,&
[X_i,Y_j]=\beta_{i\,j}Y_{i+j},& 1\leq j\leq m-2,\quad 2\leq i \leq m\\[1mm]
[Y_i,Y_j]=\gamma_{i\,j}X_{i+j},& 1\leq i\leq m,& 1\leq j \leq \min\{m,n-i\}
\end{array}$$

%From $[X_1,[X_1,Y_j]]$ with $1\leq j\leq m-2$ we have that $\beta_{2\, j}=0.$
%From $[X_1,[X_i,Y_j]]$ with $1\leq j\leq m-2 ,$ $2\leq i\leq m-1-j,$ we get  $\beta_{i\, j}=0.$
%Thus, we obtain:
%$$[X_i,Y_{m-i}]=\beta_{i\,m-i}Y_{m}=\beta_i Y_m,\ \ 3\leq i\leq n$$
%From $[X_i,[X_1,Y_{m-i-1}]]$ with $2\leq i \leq n-1,$ we get $\beta_3=0$ and $\beta_{i+1} =(-1)^{i+1}\beta_3=0.$
%From $[Y_i,[X_1,Y_{j-1}]]$ with $1\leq i\leq m-2,$ $2\leq j\leq n-i$, we get 
%$$\gamma_{i\, j}=\gamma_{i\, j-1}-\gamma_{i+1\,  j-1}$$
%Thus, we prove by induction that:
%$$\gamma_{i\, 1}=\gamma_i,\ 1\leq i\leq m,$$
%$$\gamma_{i\, j}=\sum_{ k=0}^{min\{j-1,m-i\}}(-1)^{  k} \left ( \begin{array}{c}
%j-1\\
%k
%\end{array}\right ) \gamma_{i+k},\ \ 1\leq i\leq n-2,\ 2\leq j\leq n-i.$$
%Thus, we have the family:
%$$\begin{array}{ll}
%[X_i,X_1]=X_{i+1},& 1\leq i\leq n-1,\\[1mm]
%[Y_j,X_1]=Y_{j+1},& 1\leq j\leq m-1,\\[1mm]
%[X_1,Y_j]=-Y_{j+1},& 1\leq j\leq m-1,\\[1mm]
%[Y_i,Y_1]=\gamma_i X_{i+1},& 1\leq i\leq m,\\[1mm]
%[Y_i,Y_j]=\displaystyle\sum_{k=0}^{j-1}(-1)^k \left ( \begin{array}{c}
%j-1\\
%k
%\end{array}\right ) \gamma_{i+k}X_{i+j},& 2\leq j\leq m,\ 1\leq i\leq min\{m,n-j\}.
%\end{array}$$
%From $[X_1,[Y_1,Y_1]]$ we have that $\gamma_2=0.$
%From $[X_1,[Y_j,Y_j]]$ we get 
%$$\gamma_{2j}=(-1)^{j}\sum_{s=0}^{j-2} (-1)^s \left ( \begin{array}{c}
%j-1\\
%s
%\end{array}\right ) \gamma_{j+s+1},\ 2\leq j\leq \lfloor\frac{m}{2}\rfloor.$$
%From $[Y_i,[Y_m,X_1]]$ with $1\leq i\leq n-m-1 $, we obtain
%$$\gamma_i=\sum_{s=1}^{m-i}(-1)^{s+1} \left ( \begin{array}{c}
%m\\
%s
%\end{array}\right )\gamma_{i+s}.$$

We obtain the family of naturally graded Leibniz superalgebras with maximal s-nilindex of the theorem statement.

\

{\bf (b)} There exists $k$, with $2\leq k\leq m-1,$ such that $1+\alpha_i=0$ for $2\leq i\leq k-2$ and $1+\alpha_{k-1}\neq 0.$ We have $$[X_1,Y_{k-1}]+[Y_{k-1},X_1]=(1+\alpha_{k-1})Y_k\in Ann(L).$$ Then $Y_k\in Ann(L) \mbox{ and } Y_{k+1},Y_{k+2},\dots,Y_m\in Ann(L).$
Thus,
$$\begin{array}{llll}
[X_i,X_1]=X_{i+1},& 1\leq i\leq n-1,&
[Y_j,X_1]=Y_{j+1},& 1\leq j\leq m-1\\[1mm]
[X_1,Y_j]=- Y_{j+1},& 1\leq j\leq k-2,&
[X_1,Y_{k-1}]=\alpha Y_k,& 1+\alpha\neq 0\\[1mm]
[X_i,Y_j]=\beta_{i\, j}Y_{i+j},& 1\leq j \leq k-1,\quad 2\leq i\leq m-j&\\[1mm]
[Y_i,Y_j]=\gamma_{i\, j}X_{i+j},& 1\leq j\leq k-1,\quad 1\leq i \leq min\{m,n-j\}& &
\end{array}$$

 By using  the same techinques as in the case $1$, we conclude that the obtained superalgebra is degenerated.

\end{proof}

\bibliographystyle{amsplain}

\end{document}